\newtheorem{theorem}{Theorem}   
\newtheorem{lemma}[theorem]{Lemma}
\newtheorem{proposition}[theorem]{Proposition}
\newtheorem{corollary}[theorem]{Corollary}
\theoremstyle{definition}
\newtheorem{definition}[theorem]{Definition}
\newtheorem{remark}[theorem]{Remark}
\definecolor{MyDarkBlue}{rgb}{0,0.08,0.60} 
\newcommand{\A}{{ \rm Aut }}
\newcommand{\Z}{{\mathbb{Z}}}
\newcommand{\lc}{\left\lceil}
\newcommand{\rc}{\right\rceil}
\newcommand{\lf}{\left\lfloor}
\newcommand{\rf}{\right\rfloor}
\renewcommand{\mod}{{\;\rm mod}}
\DeclareMathOperator{\GL}{{GL}}
\DeclareMathOperator{\PGL}{{PGL}}
\DeclareMathOperator{\Der}{Der}
\DeclareMathOperator{\PDer}{PDer}
\DeclareMathOperator{\ord}{ord}
\newcommand\y{\cellcolor{red!40}}
\begin{document}

\title[Galois-module structure of Artin-Schreier-Mumford curves]{On the Galois-module structure of polydifferentials of Artin-Schreier-Mumford curves, modular and integral representation theory}

\date{\today}

\author{Aristides Kontogeorgis}
\address{Department of Mathematics, University of Athens Panepistimioupolis, 15784 Athens, Greece
}
\email{kontogar@math.uoa.gr}
\author{Dimitra-Dionysia Stergiopoulou}
\address{Department of Mathematics, University of Athens Panepistimioupolis, 15784 Athens, Greece
}
\email{dstergiop@math.uoa.gr}

\bibliographystyle{amsplain}

\begin{abstract}
We study the Galois-module structure of polydifferentials for Mumford curves, defined over a field of positive charactersitic, using the theory of harmonic cocycles. For the case of Artin-Schreier-Mumford curves the structure of holomorphic polydifferentials is explicitly computed.
\end{abstract} 

\thanks{{\bf Keywords:} Automorphisms of curves, Differentials, Mumford curves, {\bf AMS subject classification} 14H37}

\maketitle

\section{Introduction}
Let $X$ be a smooth projective curve of genus $g\geq 2$ over an algebraically closed field $K$ of characteristic $p>0$, and $G$ a group of automorphisms of $X$.
The group $G$ acts on $X$ from the left, by our convention, and hence on the space of $n$-polydifferentials $H^0(X,\Omega_X^{\otimes n})$ from the right.
The so-called {\em Galois-module structure problem} for $X$ asks for the direct sum decomposition of $H^0(X,\Omega_X^{\otimes n})$ into $G$-indecomposable pieces.
In characteristic zero, the $n=1$ case is a classical result (\cite{Hu:1893}), which can be easily generalized for $n\geq 1$. 

In positive characteristic, the Galois-module structure is unknown in general.
There are only some partial results known. 
Let us give a brief overview. 
If the cover $X \rightarrow G\backslash X$ is unramified or if $(|G|,p)=1$, Tamagawa \cite{Tamagawa:51} determined the $G$-module structure of $H^0(X,\Omega_X)$. 
Valentini \cite{Val:82} generalized this result to unramified extensions with $G$ being a $p$-group. 
In the $p$-group case, moreover, Salvador and Bautista \cite{Salvador:00} determined the semi-simple part of the representation with respect to the Cartier operator. 
For the cyclic-group case, Valentini and Madan \cite{vm} and S.\ Karanikolopoulos \cite{SK} determined the structure of $H^0(X,\Omega_X)$ in terms of indecomposable modules. 
A similar study has been done for the elementary abelian case by {Calder{\'o}n, Salvador}  and Madan \cite{csm}.
Finally, N.\ Borne \cite{borne06} developed a theory, using advanced techniques from both modular representation theory and $K$-theory, for computing in some cases the  $K[G]$-{module} structure of the space of polydifferentials $H^0(X,\Omega_X^{\otimes n})$.


Let us point out that the determination of the Galois-module structure as above has several applications. 
For example, in \cite{KoJPAA06}, \cite{kockKo}, the second author connected the $K[G]$-module structure of $H^0(X,\Omega_X^{\otimes 2})$ to the computation of the tangent space of the global deformation functor of curves.

In this paper, we consider the Galois-module structure problem for the so-called {\em Artin-Schreier-Mumford curves} (see below). 
We give for these curves explicit bases of the space of polydifferentials, and apply the  theory of B.\ K\"ock \cite{Koeck:04} to complete spaces of polydifferentials by admitting controlled poles at certain points in order to obtain projective modules. 
By this way, we can prove that all indecomposable $K[G]$-modules admit $K[G]$ itself as an injective hull, and finally arrive at our main results.


Over a complete discrete valuation field $K$, D.\ Mumford \cite{mumford-curves} has shown that a smooth projective curve with the split multiplicative reduction is isomorphic to the algebraization of a rigid analytic curve over $K$ of the form $\Gamma\backslash(\mathbb{P}^{1,\mathrm{an}}_K - \mathcal{L}_\Gamma)$. 
Here, $\Gamma$ is a finitely generated torsion-free discrete subgroup of $\mathrm{PGL}(2,K)$, called a {\em Schottky group}, and $\mathcal{L}_\Gamma$ is the set of limit points.
A smooth projective curve obtained in this way, denoted by $X_{\Gamma}$, is called a \emph{Mumford curve}, and the uniformization just described provides us with a set of tools similar to those coming from the uniformization theory of Riemann surfaces.
It is known that the subgroup $\Gamma$ is always a free group of finite rank, and the rank is equal to the genus of $X_{\Gamma}$.
The 
authors together with G.\ Cornelissen have used this technique in order to bound the automorphism groups of Mumford curves in \cite{CKK}. 
In fact, the automorphism group $\A(X_\Gamma)$ is isomorphic to the quotient $N_{\Gamma}/\Gamma$ of the normalizer of $\Gamma$ in $\mathrm{PGL}(2,K)$ by $\Gamma$; cf.\ \cite[1.3]{CKK} and \cite[VII.1)]{Ge-Pu}. 
Also the equivariant deformation theory of such curves was studied by the first author and G.\ Cornelissen in \cite{CK}.

One of the tools we will use is the explicit description of polydifferentials in terms of harmonic cocycles. 
P.\ Schneider and J.\ Teitelbaum \cite{Schneider84}\cite{Teitelbaum90}, defined the space of modular forms (or harmonic measures as they are known in the literature) $C_{\mathrm{har}}(\Gamma,n)$ on the reduction graph, and they showed  that it is naturally isomorphic to $H^0(X_\Gamma,\Omega_{X_\Gamma}^{\otimes n})$.
Moreover, the space $C_{\mathrm{har}}(\Gamma,n)$ can be described by the Galois cohomology $C_{\mathrm{har}}(\Gamma,n)\cong H^1(\Gamma, P_n)$, where $P_n$ denotes the space of polynomials of one variable of degree $\leq 2n-2$ (cf.\ \S\ref{sec2} for more details).

Now let us state our main results of this paper. 
We first give the definition of Artin-Schreier-Mumford curves:
\begin{definition}\label{def-ASM}
Let $K$ be a complete non-archimedean valued field of characteristic $p>0$, and $q$ a power of $p$.
For $\lambda\in K$ with $0<|\lambda|<1$, the smooth projective model of the affine plane curve defined by the equation 
\[
(x^q-x)(y^q-y)=\lambda
\]
will be called an {\em Artin-Schreier-Mumford curve}.
\end{definition}
These curves are special from quite a few points of view. 
For example, they are the Mumford curves with maximal automorphism group (and hence their Schottky groups are the analogue of classical Hurwitz groups), cf.\ \cite{CKK} and \cite{CK-Proc}. 
They were first studied by D.\ Subrao \cite{Subrao}, Valentini-Madan \cite{vm}, and S.\ Nakajima \cite{Nak:86}.
M.\ Matignon has studied their equivariant liftability to characteristic zero \cite{matlift}, and these curves play a special role when studying the `field of definition versus field of moduli' question for cyclic covers of the projective line (cf.\ \cite{Kont-Bord}).

In this paper, we only deal with Artin-Schreier-Mumford curves as in Definition \ref{def-ASM} with $q=p$. 
For the proof of the following facts, we refer to \cite[\S9]{CKK} and \cite[p.\ 347]{CKK-israel}:
\begin{proposition} \label{prop-Schottky}
The Artin-Schreier-Mumford curves are Mumford curves of the form $X_\Gamma$, where the group $\Gamma$ is, up to conjugacy, given by the commutator group $\Gamma=[A,B]$ of the cyclic subgroups $A,B \subset \mathrm{PGL}(2,K)$ of order $p$ generated by 
\begin{equation} \label{matrices}
\epsilon_A=\begin{pmatrix} 1 & 1 \\ 0 & 1 \end{pmatrix}\quad\textrm{and}\quad\epsilon_B=\begin{pmatrix} 1 & 0 \\ s & 1 \end{pmatrix}, 
\end{equation}
respectively, where $s\in K^{\times}$ and $|s|>1$.
The groups $A$ and $B$ generate a discrete subgroup $N\subseteq\PGL(2,K)$, which is isomorphic to the free product $A\ast B$.  
Moreover$:$
\begin{itemize}
\item[{\rm (a)}] $\Gamma$ is a normal subgroup of $N$ and $N/\Gamma\cong A\times B;$
\item[{\rm (b)}] $\Gamma$ is a free group of rank $(p-1)^2$ with the basis given by the commutators $e_{i,j}=[\epsilon^i_A,\epsilon^j_B]$ $(=\epsilon^i_A\epsilon^j_B\epsilon^{-i}_A\epsilon^{-j}_B)$ for $i,j=1,\ldots,p-1$. \hfill$\square$
\end{itemize}
\end{proposition}


\begin{remark}
The relation between the parameter $\lambda$ in Definition \ref{def-ASM} and the parameter $s$ in Proposition \ref{prop-Schottky} has been studied in \cite{CKK-israel}.
\end{remark}

It has been shown in \cite[\S9]{CKK} that the automorphism group of the Artin-Schreier-Mumford curve $X_{\Gamma}$ contains $G=\Z/p\Z \times \Z/p\Z$, generated by the images of $\epsilon_A$ and $\epsilon_B$ in $\A(X_\Gamma)\cong N_{\Gamma}/\Gamma$.

The first result of this paper gives the $K[G]$-module structure of the space of $1$-differentials:
\begin{theorem}\label{theoremnis1}
{\rm (1)} As a $K[A]$-module, we have
\[
H^0(X_\Gamma,\Omega_X)\cong L^{p-1}\otimes_\Z K,
\]
where $L$ is the integral representation of $A\cong\Z/p\Z$ with the minimal rank $p-1$ $($corresponding to the matrix $M$ in {\rm (\ref{protype})} below$)$.

{\rm (2)} As a $K[G]$-module, $H^0(X_\Gamma,\Omega_X)$ is indecomposable. 
\end{theorem}

Notice that, since the space of $1$-differentials can be expressed combinatorially, the $K[A]$-module structure actually comes from an integral representation as in Theorem \ref{theoremnis1} (1), which is, however, not the case for higher polydifferentials.

\begin{theorem} \label{mainFree}
Suppose $p\neq 2$. 
For $n>1$, let $r$ $(0\leq r<p)$ be the remainder of $2n-1$ modulo $p$.

{\rm (1)} As a $K[A]$-module, the following decomposition holds$:$
$$
H^0(X_\Gamma,\Omega_{X_\Gamma}^{\otimes n})\cong K[A]^{ (p-1) (2n-1) - p \lc \frac{2n-1}{p} \rc } \oplus \left( K[A]/(\epsilon_A-1)^{p-r}\right)^p. 
$$
A similar result holds for the group $B$. 

{\rm (2)} As a $K[G]$-module, the following decomposition holds$:$  
\[
 H^0(X_{\Gamma},\Omega_{X_\Gamma}^{\otimes n})\cong K[G]^{2n-1 - 2\lc \frac{2n-1}{p} \rc} \oplus  K[G]/(\epsilon_A-1)^{p-r}
\oplus K[G]/(\epsilon_B-1)^{p-r}.
\]
\end{theorem}

Let us now describe the structure of this paper. 
The next section (\S\ref{sec2}) recalls the description of the space of polydifferentials of Mumford curves in terms of the group cohomologies.
In \S\ref{sec3} we focus on $1$-differentials. 
As a side result, we obtain a bound for the order of an automorphism acting on them (see Corollary \ref{cor-orderupperbound}).
We also give in this section a criterion for a module to be indecomposable, based on the dimension of the space of invariant elements. 
From \S\ref{favorCont} onward, we proceed to the study of the space of polydifferentials. 
In \S\ref{6} we first study  $K[A]$-module structure using a combinatorial approach. 
Then we also show how results of S.\ Nakajima \cite{Nak:86} can be applied without the usage of the theory of Mumford curves.
For the $K[G]$-module structure, we employ both the theory of projective covers and the theory of B.\ K\"ock on the Galois-module structure of weakly ramified covers. 

\medskip
{\bf Conventions.}
For a ring $R$ and a group $G$, we denote by $R[G]$ the group ring over $R$. 
As for $R[G]$-modules, we always consider {\em right} $R[G]$-modules, unless otherwise clearly stated.
If an $R[G]$-module $V$ is finite free as an $R$-module, then, for any $\gamma\in G$, the {\em matrix representation} of $\gamma$ by an $R$-basis $\{v_1,\ldots,v_r\}$ of $V$ is the matrix $M_{\gamma}\in\GL(r,R)$ whose $i$-th {\em row} is given by $(a_1,\ldots,a_r)$, where $v^{\gamma}_i=\sum_{j=1}^ra_jv_j$. Notice that, by this way, the map $G\rightarrow\GL(r,R)$ by $\gamma\mapsto M_{\gamma}$ is a group homomorphism.
Accordingly, Jordan matrices in our sense will be the transpose of the conventional ones, having $1$'s on the {\em lower} subdiagonal entries.


\medskip
\noindent {\bf Acknowledgements}
We would like to thank Janne Kool and Fumiharu Kato for their input at an early stage of this project. 
 The research project is implemented in the framework of H.F.R.I Call “Basic research Financing Horizontal support of all Sciences)” under the National Recovery and Resilience Plan “Greece 2.0” funded by the European Union Next Generation EU, H.F.R.I.  
Project Number: 14907.
\begin{center}
\includegraphics[scale=0.4]{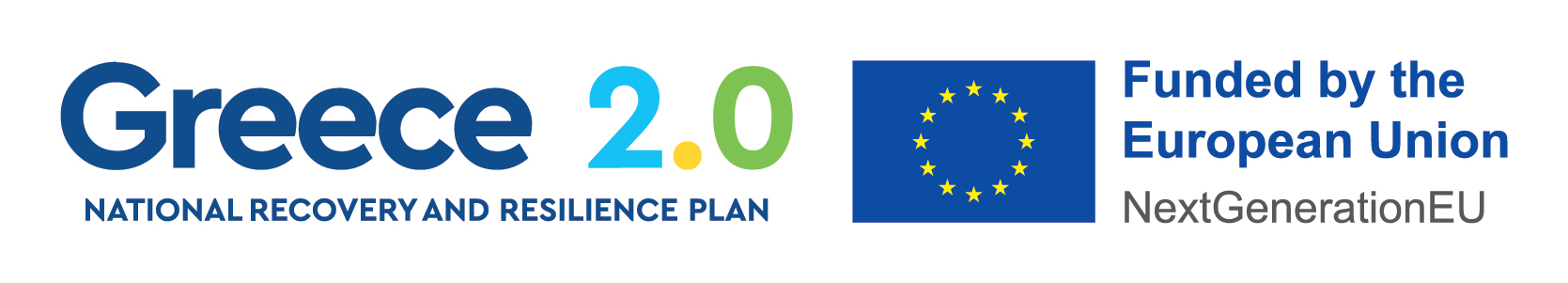}
\hskip 1cm
\includegraphics[scale=0.05]{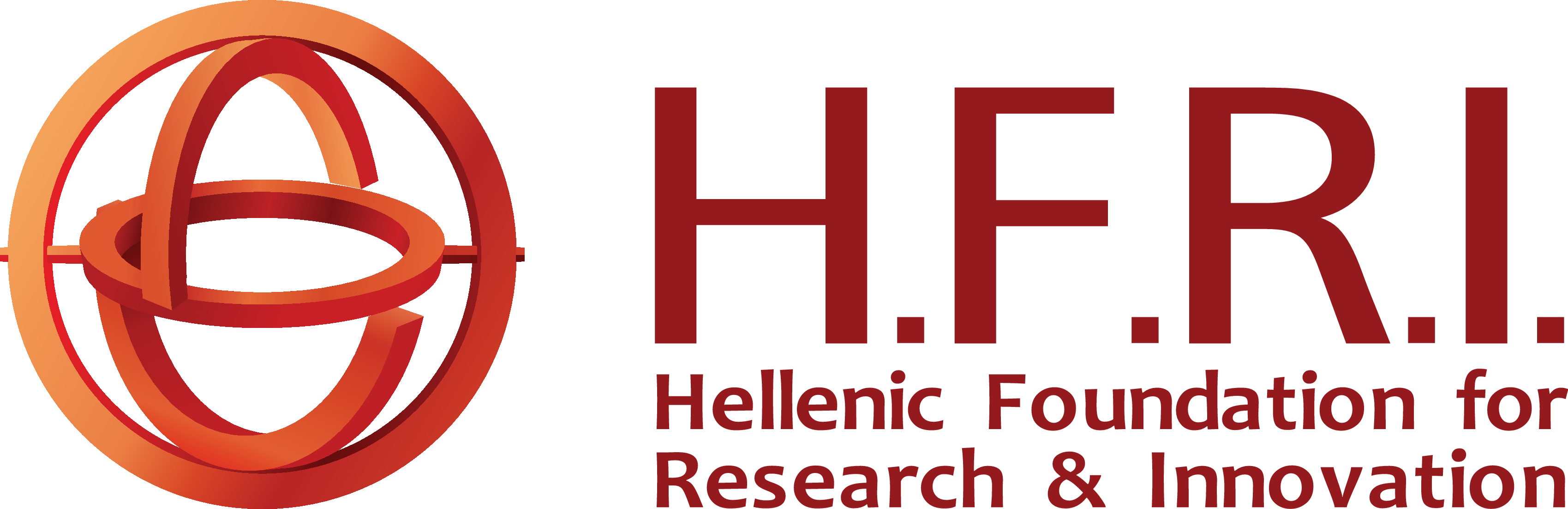}
\end{center}


\section{Preliminaries} \label{sec2}
\subsection{Invariants and direct factors}\label{3.1}
Let $K$ be an algebraically closed field of characteristic $p>0$, $G\cong\Z/p\Z$ a cyclic group of order $p$, and $\sigma\in G$ a generator.
For each $1\leq\mu\leq p$, consider the $\mu\times\mu$ Jordan matrix $J_{\mu}\in\GL(\mu,K)$ (with the $1$'s on the lower subdiagonal entries) of eigenvalue $1$.
Then the $\mu$-dimensional $K$-vector space $K^{\mu}$ can be regarded as a right $K[G]$-module by $\sigma\mapsto J_{\mu}$.
By a slight abuse of notation, we denote thus obtained $K[G]$-module by the same notation $J_{\mu}$; notice that $J_{\mu}$ is an indecomposable $K[G]$-module, isomorphic to $K[G]/((\sigma-1)^{\mu})$ (cf.\ \cite[\S1 p.\ 107]{vm}).

If $V$ is an arbitrary finite dimensional $K[G]$-module, then, by taking Jordan normal forms, one has the indecomposable decomposition of the form $V\cong\bigoplus^r_{i=1}J_{\mu_i}$ as $K[G]$-module, where $r\geq 0$ and $1\leq\mu_i\leq p$ for each $i=1,\ldots,r$.

\begin{proposition}\label{numberofsummands}
Let $G$ be a finite cyclic $p$-group, and $V$ a finite dimensional $K[G]$-module. 
Then the number of indecomposable $K[G]$-summands of $V$ is equal to $\dim_KV^G$.
\end{proposition}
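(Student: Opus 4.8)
The plan is to reduce everything to the explicit classification of indecomposable $K[G]$-modules recalled in the introduction, combined with the additivity of the invariants functor. Fix a generator $\sigma$ of the cyclic $p$-group $G$ and set $t=\sigma-1$. Since $\mathrm{char}(K)=p$ and $\sigma^{|G|}=1$, one has $(\sigma-1)^{|G|}=\sigma^{|G|}-1=0$, so the group algebra is the truncated polynomial ring $K[G]\cong K[t]/(t^{|G|})$; this is a local, uniserial $K$-algebra. By the Krull--Schmidt theorem the finite-dimensional module $V$ decomposes, uniquely up to isomorphism and reordering, as a direct sum $V=\bigoplus_i V_i$ of indecomposable submodules, and each $V_i$ is isomorphic to exactly one of the cyclic modules $K[G]/\langle(\sigma-1)^{k_i}\rangle$ with $1\leq k_i\leq |G|$.

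Next I would observe that taking $G$-invariants is simply the kernel of the $K$-linear endomorphism $\sigma-1$ acting on $V$, namely $V^G=\ker(\sigma-1\mid V)$, and that this is compatible with direct sums: $V^G=\bigoplus_i V_i^G$. Consequently $\dim_K V^G=\sum_i \dim_K V_i^G$, and it suffices to compute $\dim_K V_i^G$ for a single indecomposable summand.

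The heart of the argument is then the computation for one indecomposable $V_i\cong K[t]/(t^{k})$: here $V_i^G=\ker(t)$, and in $K[t]/(t^{k})$ the annihilator of $t$ is precisely the one-dimensional $K$-subspace spanned by the image of $t^{k-1}$. Thus $\dim_K V_i^G=1$ for every indecomposable summand, independently of $k_i$. Summing over $i$ gives $\dim_K V^G=\#\{i\}$, which is exactly the number of indecomposable $K[G]$-summands of $V$, as claimed.

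I do not expect a serious obstacle once the classification is granted; the only point deserving care is the invariant-dimension computation, which conceptually reflects that $K[G]$ is a local Frobenius algebra whose unique simple module is the trivial module $K$. From that viewpoint $V^G$ coincides with the socle of $V$, and an indecomposable module over a self-injective uniserial algebra has simple socle, hence one-dimensional invariants. I would nevertheless phrase the proof through the explicit uniserial description $K[t]/(t^{k})$ in order to keep the statement elementary and self-contained.
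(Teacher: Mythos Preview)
Your proof is correct and is essentially the same argument as the paper's: the paper phrases it as the Jordan normal form of a generator $\sigma$ of $G$ acting on $V$, observing that each Jordan block contributes exactly a one-dimensional invariant subspace, while you phrase the same thing via the identification $K[G]\cong K[t]/(t^{|G|})$ with $t=\sigma-1$ and the explicit computation $\ker(t\mid K[t]/(t^k))=K\cdot t^{k-1}$. Your version is a bit more explicit (you invoke Krull--Schmidt and carry out the socle computation), but the mathematical content is identical.
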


\begin{proof}
The indecomposable $K[G]$-summands of $V$ are in one-to-one correspondence with the blocks of the Jordan normal form of a generator $\sigma$ of $G$, seen as an element of $\mathrm{GL}(V)$.
Since a Jordan block has the one-dimensional invariant subspace, every direct summand contributes exactly an one dimensional invariant subspace.  
\end{proof}

\begin{remark}
The assumption that $G$ is cyclic is necessary. 
See, for example, the $K[\Z/p\Z\times \Z/p\Z]$-module given by Heller and Reiner in \cite[Example 1.4, p.\ 157]{Wein}. 
\end{remark}

\begin{corollary}\label{cor-11}
Let $G$ be an abelian $p$-group acting on a finite dimensional non-zero $K$-vector space $V$.
Then we have $V^G\neq\{0\}$. 
\end{corollary}

\begin{proof}
The group $G$ is isomorphic to a direct product of cyclic $p$-groups. 
The proof follows by induction with respect to the number of the direct factors, aided with the fact $M^{H_1\times H_2}=(M^{H_1})^{H_2}$ and Proposition \ref{numberofsummands}.
%
\end{proof}
%


\begin{corollary} \label{14}
In the situation as in Corollary {\rm \ref{cor-11}}, if $\dim_KV^G=1$, then $V$ is indecomposable. \hfill$\square$
\end{corollary}


\subsection{Derivations and the group cohomology}\label{3.1bis}
Let $K$ be a field, $G$ a group, and $P$ a right $K[G]$-module.
A {\em derivation} (or {\em $1$-cocycle}) of $G$ to $P$ is a map $d\colon\Gamma\rightarrow P$ satisfying 
\begin{equation}\label{eqn-onecocycle}
 d(\gamma \gamma')= (d\gamma)^{\gamma'} + d\gamma'
\end{equation}
for any $\gamma,\gamma' \in G$. 
In particular, for $\gamma\in G$ and a integer $k\geq 0$, we have
\begin{equation}\label{eqn-onecocycle2}
 d(\gamma^k)= (d\gamma)^{1+\gamma+\cdots+\gamma^{k-1}}.
\end{equation}
The set of all derivations $\mathrm{Der}(G,P)$ is naturally a $K$-linear space.
A {\em principal derivation} (or {\em $1$-coboundary}) is a derivation of the 
form 
\[
 G \ni \gamma \mapsto F^\gamma -F, 
\]
by an element $F\in P$. 
Principal derivations form a subspace $\PDer(G,P)$ of $\Der(G,P)$. 
The quotient is the ($1$st) \emph{group cohomology}:
\[
 H^1(G,P)=\Der(G,P)/\PDer(G,P).
\]

\subsection{Polydifferentials on Mumford curves}\label{sub-polymumford}
Now, let $K$ be a complete non-archimedean valued field, $\Gamma\subseteq\PGL(2,K)$ a Schottky subgroup, and $X_{\Gamma}$ the Mumford curve obtained from $\Gamma$.
If $N=N_{\Gamma}\subseteq\PGL(2,K)$ is the normalization of $\Gamma$ in $\PGL(2,K)$, the quotient group $G=N/\Gamma$, which acts on $X_{\Gamma}$ from the left, is the automorphism group $\A(X_\Gamma)$ of $X_{\Gamma}$ over $K$.
Notice that the group $\Gamma$ is a free group of finite rank, whose rank, say $g$, is equal to the genus of $X_{\Gamma}$.
We fix a free generating set $\{\gamma_1,\ldots,\gamma_g\}$ of $\Gamma$.

For any right $K[\Gamma]$-module $P$, each derivation $d\colon\Gamma\rightarrow P$ is uniquely determined by its values $h_i=d(\gamma_i)$ for $1\leq i\leq g$, and conversely, since $\Gamma$ is free, such values $h_i\in P$ can be freely chosen to obtain a derivation $d$; indeed, once $h_i$'s are chosen, then $d(w)$ for any $w\in\Gamma$ is uniquely determined by the recursive application of (\ref{eqn-onecocycle}).

For a positive integer $n$, we consider the space of polynomials $P_n\subseteq K[T]$ of degree $\leq 2(n-1)$, which is a $K$-vector space of dimension $2n-1$. 
The group $\mathrm{PGL}(2,K)$ acts on $P_n$ from the right as follows: for 
$\gamma=\begin{pmatrix} a & b \\ c & d \end{pmatrix} \in \mathrm{PGL}(2,K)$ and $F \in P_n$, we define 
\begin{equation} \label{pol-action}
 F^{\gamma}(T):=\frac{(cT+d)^{2(n-1)} }{ (ad-bc)^{n-1} } F\left( \frac{aT+b}{cT+d} \right) \in K[T].
\end{equation}

Now, consider the space $\mathrm{Der}(\Gamma,P_n)$ of derivations.
By what we have seen above, this is a $K$-linear space of dimension $(2n-1)g$.
The space $\mathrm{Der}(\Gamma,P_n)$ admits a right action of $N$ (and hence of the group algebra $K[N]$) as follows: for $\delta\in N$ and $d \in \mathrm{Der}(\Gamma,P_n)$, define
\begin{equation} \label{der-act}
 (d^{\delta})(\gamma)=[d(\delta\gamma\delta^{-1})]^{\delta}
\end{equation}
for $\gamma\in\Gamma$.
We have thus a well-defined right action of $G=N/\Gamma$ on the group cohomology $H^1(\Gamma,P_n)$, since $\Gamma$ acts trivially modulo principal derivations: 
\[
[d(\delta\gamma\delta^{-1})]^{\delta}=d(\delta)^\gamma - d(\delta) + d(\gamma) \mbox{ for } \delta, \gamma \in  \Gamma. 
\]


\begin{theorem}[{\rm \cite[Theorem 1]{Teitelbaum90}}] \label{th5}
For any $n\geq 1$, the space $H^0(X_{\Gamma},\Omega^{\otimes n}_{X_{\Gamma}})$ of $n$-differentials on the curve $X_\Gamma$ is naturally isomorphic to the space group cohomology $H^1(\Gamma,P_n)$.
Moreover, this identification is $G$-equivariant with respect to the natural right $G$-action on $H^0(X_{\Gamma},\Omega^{\otimes n}_{X_{\Gamma}})$. \hfill$\square$
\end{theorem}

%
%
%
\section{The space of $1$-differentials} \label{sec3}
\subsection{{\boldmath $G$-action on $1$-differentials}}
We continue to work with the notation as in \S\ref{sub-polymumford}, and suppose $K$ is of characteristic $p>0$.
By Theorem \ref{th5}, we have
\begin{equation}
\begin{split}
H^{0}(X_\Gamma,\Omega_{X_\Gamma}) &\cong H^1(\Gamma,P_0)=H^1(\Gamma,K)=
 \mathrm{Hom}(\Gamma,K)=\mathrm{Hom}(\Gamma,\Z)\otimes K  \\ 
 &\cong\mathrm{Hom}(\Gamma^{\mathrm{ab}},\Z) \otimes K, \label{hol-iso}
\end{split}
\end{equation}
where $\Gamma^{\mathrm{ab}}\cong\Z^g$ denotes the maximal abelian quotient of the free group $\Gamma$. 
Since $G=N/\Gamma$ acts on $\Gamma^{\mathrm{ab}}$ from the left (by conjugation), we have the right action of $G$ on $\mathrm{Hom}(\Gamma^{\mathrm{ab}},\Z)$, and hence on $\mathrm{Hom}(\Gamma^{\mathrm{ab}},\Z)\otimes K$.
The isomorphism $H^{0}(X_\Gamma,\Omega_{X_\Gamma})\cong\mathrm{Hom}(\Gamma^{\mathrm{ab}},\Z) \otimes K$ by (\ref{hol-iso}) is easily seen to be $G$-equivariant.
In particular, the $G$-action on $H^{0}(X_\Gamma,\Omega_{X_\Gamma})$ comes from an integral representation $\rho\colon G\rightarrow \mathrm{GL}(g,\Z)$.
Then, by \cite{KockF}, we have:
\begin{proposition}\label{faithful}
The integral representation $\rho$ is injective, i.e., $G$ can be seen as a subgroup of $\mathrm{GL}(g,\mathbb{Z})$, unless the cover $X\rightarrow G\backslash X=Y$ is not tamely ramified, the characteristic of $K$ is $2$, and the genus of $Y$ is $0$. \hfill$\square$
\end{proposition}




\begin{corollary}\label{cor-orderupperbound}
Suppose $p\neq 2$. 
If the order of an element $g'\in G$ is a prime number $q$, then $q \leq g+1$.
\end{corollary}

\begin{proof}
This follows from Proposition \ref{faithful} and a special case of \cite[Theorem 2.7]{KuzPav}.
\end{proof}

%

\subsection{Proof of Theorem \ref{theoremnis1}} \label{freeG}
Let $A=\langle \epsilon_A \rangle$, $B=\langle \epsilon_B \rangle$, $\Gamma$, $N$, and $G=N/\Gamma$ be as in Proposition \ref{prop-Schottky}. 
The set $\{e_{i,j}=[\epsilon_A^i,\epsilon_B^j]\,|\,1\leq i,j \leq p-1\}$ gives a basis of $\Gamma$ (cf.\ \cite[p.\ 6, Prop.\ 4]{SerreTrees}).
%
By an easy calculation, we have
\begin{equation}
\label{eq:eAactions}
\epsilon_A e_{i,j}\epsilon^{-1}_A=[\epsilon_A^{i+1},\epsilon_B^j][\epsilon_A,\epsilon_B^j]^{-1}\quad\textrm{for}\quad 1\leq i,j\leq p-1,
\end{equation}
which describes the left action of $A$ on $\Gamma$.

For any $\gamma\in\Gamma$, let us denote by $\overline{\gamma}$ the image of $\gamma$ in the maximal abelian quotient $\Gamma^{\mathrm{ab}}\cong \Z^g$.
The free abelian group $\Gamma^{\mathrm{ab}}$ has the $\Z$-basis consisting of $\overline{e}_{i,j}$'s.
Let $\{f_{i,j}\}$ be the dual basis in $V=\mathrm{Hom}(\Gamma^{\mathrm{ab}},\Z)\otimes K\cong H^0(X_{\Gamma},\Omega_{X_{\Gamma}})$ of $\{\overline{e}_{i,j}\}$.
Since, written additively in $\Gamma^{\mathrm{ab}}\cong \Z^g$, we have
$$
\overline{\epsilon_A e_{i,j}\epsilon^{-1}_A}=\begin{cases} \overline{e}_{i+1,j}-\overline{e}_{1,j} & \textrm{for $1\leq i \leq p-2$,}\\ -\overline{e}_{1,j} &\textrm{if $i=p-1$,}\end{cases}
$$
for $1\leq j \leq p-1$, we have
\begin{equation}\label{eqn-dualbasis}
f^{\epsilon_A}_{i,j}=\begin{cases}{\displaystyle -\sum^{p-1}_{k=1}f_{k,j}}&\textrm{if $i=1$,}\\ f_{i-1,j}&\textrm{for $2\leq i\leq p-1$.}
\end{cases}
\end{equation}
\begin{remark}
Usually on the dual space we act in terms of the contragredient representation in order to have a left action on dual elements as well. Here the action on $f:V\rightarrow k$ is given by $f \mapsto f^g$, where $f^g$ is the function sending $v\mapsto f(gv)$. 
\end{remark}
Hence he matrix representation of $\epsilon_A$ with respect to the basis $\{f_{i,j}\}$ is given by the block diagonal matrix $\mathrm{diag}(M,\ldots,M)$ consisting of $p-1$ copies (indexed by $j=1,\ldots, p-1$) of
\begin{equation} \label{protype}
 M= 
 \begin{pmatrix}
  -1 & -1 & -1 & \cdots & -1 \\
   1 &  0 & \cdots & \cdots & 0         \\
   0 &  1 &  0       &  & \vdots      \\
   \vdots & \ddots& \ddots & \ddots & \vdots \\
   0      & \cdots & 0  & 1 & 0\\
 \end{pmatrix} \in \mathrm{GL}(p-1,\Z)
\end{equation}
(cf.\ our convention for the matrix representation in Introduction).
Notice that the matrix $M$ has characteristic polynomial $\frac{x^p-1}{x-1}=1+x+\cdots +x^{p-1}$, of which $M$ is the companion matrix.
The integral representation of $\Z/p\Z$ on $\Z^{p-1}$ given by the matrix $M$, denoted by $L$ in the sequel, is the one with the minimal degree $p-1$; cf.\ \cite{KuzPav}. 
We thus have proven the first part of Theorem \ref{theoremnis1}.


To proceed, let us compute the invariant part by the action of $A$. 
It suffices to look at each block for $1\leq j \leq p-1$.
The condition for an element $\sum_{i=1}^{p-1} \lambda_i f_{i,j}$ to be invariant is given by
\[
\sum_{i=1}^{p-1} \lambda_i f_{i,j}=
 \left( \sum_{i= 1}^{p-1}  \lambda_i f_{i,j}\right)^{\epsilon_A}=
-\lambda_1\sum_{i= 1}^{p-1} f_{i,j}+\sum_{i= 1}^{p-2} \lambda_{i+1} f_{i,j},
\]
which is equivalent to $\lambda_i=i\cdot\lambda_1$ for $i=1,\ldots,p-1$.
Hence, each diagonal block contributes $1$ to the dimension of the space of invariants.
Since there are $p-1$ of them, the space of $A$-invariants in $V=\mathrm{Hom}(\Gamma^{\mathrm{ab}},\Z)\otimes K\cong H^0(X_{\Gamma},\Omega_{X_{\Gamma}})$ has dimension $p-1$, generated by the elements
\[
 f_j=\sum_{i= 1}^{p-1} i\cdot f_{i,j}  \mbox{ for } 1\leq j \leq p-1.
\]

We can now compute the space of $A\times B$-invariants by using the fact $V^{A\times B}=(V^A)^B$. 
Similarly to (\ref{eqn-dualbasis}), one computes
\begin{equation}\label{eqn-dualbasis2}
f^{\epsilon_B}_{i,j}=\begin{cases}{\displaystyle -\sum^{p-1}_{k=1}f_{i,k}}&\textrm{if $j=1$,}\\ f_{i,j-1}&\textrm{for $2\leq j\leq p-1$.}
\end{cases}
\end{equation}
From this, one has 
\begin{equation}\label{eqn-dualbasis3}
f^{\epsilon_B}_j=\begin{cases}{\displaystyle -\sum^{p-1}_{k=1}f_k}&\textrm{if $j=1$,}\\ f_{j-1}&\textrm{for $2\leq j\leq p-1$,}
\end{cases}
\end{equation}
which means that the matrix representation of $\epsilon_B$ with respect to the basis $\{f_j\}$ of the space of $A$-invariants coincides with the one in (\ref{protype}).
Hence, the space $V^{A\times B}$ is one dimensional and the representation is indecomposable by Corollary \ref{14}, which finishes the proof of the second part of Theorem \ref{theoremnis1}.

\section{Computations on Artin-Schreier-Mumford  curves continued.} \label{favorCont}
We continue with the notation of the previous subsection.
In this section, as a preparation for the proof of Theorem \ref{mainFree}, we first compute the space $H^1(\Gamma,P_n)^G$, and study the $K[A]$-module structure of $\mathrm{Der}(\Gamma,P_n)$.
(See \S\ref{sec2} for the definition of $P_n$.)


\subsection{Computation of the space {\boldmath $H^1(\Gamma,P_n)^G$}}\label{sec-5.1}
We first of all prove:
\begin{proposition} \label{Gamma-invar} 
We have $P^{\Gamma}_n=H^0(\Gamma,P_n)=\{0\}$ for $n>1$.
\end{proposition}

To show the proposition, we need the following lemma: 
\begin{lemma}\label{lem-entirefixedpoint}
For any discrete free subgroup $\Gamma\subseteq\PGL(2,K)$ of finite rank $\geq 2$ and any closed point $x$ of $\mathbb{P}^1_K$, the $\Gamma$-orbit $\Gamma\cdot x=\{\gamma x\,|\,\gamma\in\Gamma\}$ is an infinite set.
\end{lemma}

\begin{proof}
Suppose $\Gamma\cdot x$ is finite.
If $\{\gamma_1,\ldots,\gamma_g\}$ $(g\geq 2)$ is a free basis of $\Gamma$, then there exists an integer $N\geq 1$ such that $\gamma^N_ix=x$ for any $1\leq i\leq g$.
Since the subgroup generated by $\gamma^N_1,\ldots,\gamma^N_g$ is discrete and free of rank $g$, we may replace $\Gamma$ by this subgroup, and thus may assume that $x$ is fixed by every element in $\Gamma$.
Since $g\geq 2$, one can find two $\gamma,\delta\in\Gamma$ that share exactly one point $x$ as their fixed points.
Then one sees easily (cf.\ the proof of \cite[4.2 (3)]{OrbifoldKato}) that $[\gamma,\delta]\in\Gamma$ is a parabolic element, and hence is of order $p$, which is absurd.
\end{proof}

\begin{proof}[{\it Proof of Proposition {\rm \ref{Gamma-invar}}}]
Let $F\in P^{\Gamma}_n$.
Notice first that, since $n>1$, $F$ cannot be a non-zero constant; indeed, there exists an element $\begin{pmatrix}a&b\\ c&d \end{pmatrix}\in\Gamma$ with $c\neq 0$ (e.g., $[\epsilon_A,\epsilon_B]$).
Hence, if $F\neq 0$, there exists an irreducible polynomial over $K$ that divides $F$.
On the other hand, it can be checked by an easy calculation that, if $\rho\in\overline{K}$ is a root of $F$, then for any $\gamma=\begin{pmatrix}a&b\\ c&d \end{pmatrix}\in\PGL(2,K)$, ${\displaystyle \gamma^{-1}(\rho)=\frac{d\rho-b}{-c\rho+a}}$ is a root of $F^{\gamma}$.
Hence, by Lemma \ref{lem-entirefixedpoint}, $F$ has to be divided by infinity many irreducible polynomials, which is absurd.
\end{proof}

\begin{corollary} \label{invCoh} 
For $n>1$, we have  
\[
 H^1(N,P_n)\cong H^1(\Gamma,P_n)^G.
\]
\end{corollary}

\begin{proof}
 Consider the $5$-term restriction-inflation sequence  coming from the Lyndon-Hochschild-Serre spectral sequence \cite[par. 6.8.3]{Weibel}: 
$$
0 \rightarrow H^1(G,P^{\Gamma}_n)\stackrel{\mathrm{inf}}{\longrightarrow}
H^1(N,P_n) \stackrel{\mathrm{res}}{\longrightarrow} H^1(\Gamma,P_n)^G\rightarrow H^2(G,P^{\Gamma}_n)\rightarrow  H^2(N,P_n). 
$$
By Proposition \ref{Gamma-invar}, we have $H^1(G,P^{\Gamma}_n)= H^2(G,P^{\Gamma}_n)=\{0\}$, whence the result.
\end{proof}

\begin{remark}
For $n=1$, we have $P_0=K$, and we compute 
\begin{equation}\label{K2}
H^1(G,P^{\Gamma}_0) \cong H^1(N,P_0)\cong K^2.
\end{equation}
Indeed, the action of $N\cong A\ast B$ on $K$ is trivial, and hence by \cite[Ex.\ 6.2.5, p.\ 171]{Weibel}, we have 
$$
H^1(N,P_0)\cong H^1(A,K) \times H^1(B,K)\cong K^2.
$$
On the other hand, by \cite[\S3.5, p.\ 32]{Evens}, the cohomology ring $H^{\ast}(G,K)$ (recall that $G\cong A\times B$) is of the form
$$
H^*(G,K)\cong\bigwedge [\eta_1,\eta_2] \otimes k[\xi_1,\xi_2],
$$
where $\deg\eta_i=1$, $\deg\xi_i=2$, $\eta_i^2=0$.  
The degree-$1$ part is the two dimensional vector space spanned by $\eta_1,\eta_2$, hence we deduce that the inflation map $H^1(G,P^{\Gamma}_0)\hookrightarrow H^1(N,P_0)$ is an isomorphism, obtaining the isomorphisms as in (\ref{K2}).
Notice that $H^2(G,K)$ is of dimension three, generated by $\eta_1 \wedge \eta_2, \xi_1,\xi_2$, while the space $H^2(N,K)\cong H^2(A,K) \times H^2(B,K)$ (by \cite[Cor.\ 6.2.10, p.\ 170]{Weibel}) is two-dimensional, being compatible with the computation of invariants done in section \ref{freeG}, and the map $H^2(G,K) \rightarrow H^2(N,K)$ is surjective.
\end{remark}


To proceed, we need a convenient basis for the space $P_n$. 
Consider
\begin{equation} \label{binomial-def}
\binom{T}{k}=\frac{T(T-1)(T-2)\cdots (T-k+1)}{k!} \in K[T],
\end{equation}
for $k\geq 0$, which is a polynomial of degree $k$. 
For $k<0$, we set $\binom{T}{k}=0$. 
For each $k=0,\ldots,2(n-1)$, let $q$ and $r$ be the integers such that $k=qp+r$ and $0\leq r<p$, and define
\begin{equation} \label{bk}
b_k= \left( T^p-T \right)^q\binom{T}{r}.
\end{equation}
The elements $b_k$ for $0 \leq k \leq 2(n-1)$ form a basis of $P_n$, and using the binomial relation, we have
\begin{equation}\label{eqn-binomialrel}
b^{\epsilon_A}_k=\begin{cases} b_k+b_{k-1} &\textrm{if $p\nmid k$,}\\ b_k&\textrm{if $p|k$.}\end{cases}
\end{equation}
Hence $\epsilon_A$ in terms of $\{b_k\}$ is expressed by the block diagonal matrix 
\begin{equation}
\label{eqn-EA}
E_A=\mathrm{diag}(J_p,\ldots,J_p,J_r)
\end{equation}
consisting of $\lf\frac{2n-1}{p} \rf$ copies of $J_p$ and $J_r$ (cf.\ \S\ref{3.1} for the notation), where $0\leq r<p$ is the remainder of $2n-1$ modulo $p$; here, we put $J_0=\{0\}$ for convenience.

\begin{proposition}\label{prop-dimensions1}
Suppose $n>1$. 

{\rm (1)} We have $\dim_K\Der(A,P_n)=\dim_K\Der(B,P_n)=2n-1-\lf \frac{2n-1}{p} \rf$.

{\rm (2)} We have $\dim_KH^1(N,P_n)=2n-1 -2 \lf \frac{2n-1}{p}\rf$.
\end{proposition}

To show the proposition we the following lemma:
\begin{lemma} \label{sums-powerJord}
Consider the Jordan matrix $J_{\mu}$ for $1\leq\mu\leq p$.
We have 
\[
I_{\mu}+J_\mu+J_\mu^2+ \cdots J_\mu^{p-1}=
\begin{cases}
0 & \mbox{ if } \mu<p, \\
\begin{pmatrix}
0 & 0 & \cdots & 0 \\
\vdots & \vdots & \ddots  & \vdots \\
0 & 0 & \cdots & 0 \\
1 & 0 & \cdots & 0 \\
\end{pmatrix} & \mbox{ if } \mu=p.
\end{cases}
\]
\end{lemma}

\begin{proof}
Set $N_{\mu}=J_{\mu}-I_{\mu}$, which is the nilpotent Jordan matrix.
Then the assertion follows immediately from the direct calculation
\begin{equation*}
\begin{split}
I_{\mu}+J_\mu+&J_\mu^2+\cdots+J_\mu^{p-1}=\sum_{i=0}^{p-1}{(N_{\mu}+I_{\mu})}^i=\sum_{i=0}^{p-1} \sum_{j=0}^i \binom{i}{j} N^j_\mu=\sum_{j=0}^{p-1} \sum_{i=j}^{p-1} \binom{i}{j} N^j_\mu\\ 
&=\sum_{j=0}^{p-1} \left[\binom{p}{j+1} - \sum_{i=0}^{j-1} \binom{i}{j}\right] N^j_\mu=\sum_{j=0}^{p-1}\binom{p}{j+1} N^j_\mu=N_\mu^{p-1},
\end{split}
\end{equation*}
since the characteristic of $K$ is $p$ which divides $\binom{p}{j+1}$ for every $j\in\{0,\dots ,p-2\}$. We have also used the identity $\sum_{i=0}^{p-1} \binom{i}{j}=\binom{p}{j+1}$, which comes from $\sum_{i=0}^{p-1} (1+T)^i=[(1+T)^p-1]/T$.
\end{proof}

\begin{proof}[{\it Proof of Proposition {\rm \ref{prop-dimensions1}}}]
(1) Each derivation $\delta\in\Der(A,P_n)$ is completely determined by the image $F=\delta(\epsilon_A)\in P_n$ with 
$$
F^{1+\epsilon_A+\epsilon_A^2+\cdots \epsilon_A^{p-1}}=0,
$$
which comes from (\ref{eqn-onecocycle2}); that is, $\Der(A,P_n)$ is isomorphic to the kernel of the block diagonal matrix $I_{2n-1}+E_A+E^2_A+\cdots+E^{p-1}_A$, consisting of $\lf\frac{2n-1}{p} \rf$ copies of $I_p +J_p + \cdots +J_p^{p-1}$ and $I_r +J_r + \cdots J_r^{p-1}$, where $0\leq r<p$ is the remainder of $2n-1$ modulo $p$, on the $K$-linear space $P_n$. Using Lemma \ref{sums-powerJord}, we deduce that $\dim_K\Der(A,P_n)=(p-1)\lf\frac{2n-1}{p} \rf +r=2n-1-\lf\frac{2n-1}{p} \rf$. Since $\epsilon_B=\tau \epsilon_A \tau$, where $\tau=\begin{pmatrix} 0&1\\ s&0\end{pmatrix}\in\PGL(2,K)$ is an involution, a similar argument yields $\dim_K\Der(B,P)=2n-1-\lf\frac{2n-1}{p} \rf$, as needed.

(2) Since every derivation $\delta$ on $N\cong A\ast B$ can be recovered from $\delta|_A$ and $\delta|_B$, one has the $K$-linear isomorphism $\Der(N,P_n) \rightarrow \Der(A,P_n) \times \Der(B,P_n)$.
Thus we have the exact sequence
$$
0\rightarrow\PDer(N,P_n)\rightarrow\Der(A,P_n)\times\Der(B,P_n)\rightarrow H^1(N,P_n)\rightarrow 0.
$$
On the other hand, due to Proposition \ref{Gamma-invar}, the mapping $P_n\rightarrow\PDer(N,P_n)$, which maps $F$ to the principal derivation $\delta\mapsto F^{\delta}-F$, is bijective.
Hence we have
\begin{equation*}
\begin{split}
\dim_KH^1(N,P_n)&=2(2n-1) -2 \lf \frac{2n-1}{p}\rf  -(2n-1) \\ &= 2n-1-2 \lf \frac{2n-1}{p}\rf,
\end{split}
\end{equation*}
as desired.
\end{proof}

\subsection{{\boldmath The $K[A]$-module structure of {\boldmath $\Der(\Gamma, P_n)$}}}\label{5.2}
We want to compute the action of $G\cong A\times B$ on the coholomogy group $H^1(\Gamma,P_n)$.
To this end, we first calculate $d^{\delta}$ for $d\in\Der(\Gamma,P_n)$ and $\delta\in A$ (and $\delta\in B$, as well).

Recall that the elements $e_{i,j}=[\epsilon^i_A,\epsilon^j_B]$, $i,j\in\{1,\ldots,p-1\}$, form a free basis of $\Gamma$.
For any $i,j,i',j'\in\{1,\ldots,p-1\}$ and $k\in\{0,\dots,2n-2\}$, we define $d^{(k)}_{i,j}\in\Der(\Gamma,P_n)$ by
\begin{equation} 
\label{eq:dijdef}
d^{(k)}_{i,j}(e_{i',j'})=\delta_{ii'}\delta_{jj'}b_k^{\epsilon^{-j}_B}
\end{equation}
(recall the definition of $b_k$ in \S\ref{sec-5.1}).
Then, the elements $d^{(k)}_{i,j}$, $i,j\in\{1,\ldots,p-1\}$, $k\in\{0,\dots,2n-2\}$ form a $K$-basis of $\Der(\Gamma,P_n)$.
We calculate, using equations (\ref{eqn-onecocycle}), (\ref{der-act}) and  (\ref{eq:eAactions})
\begin{equation} \label{act-on-deriv0}{}
\begin{split}
(d^{(k)}_{i,j})^{\epsilon_A}(e_{i',j'})&=[d^{(k)}_{i,j}(e_{i'+1,j'})]^{\epsilon^{j'}_B\epsilon_A\epsilon^{-j'}_B}-[d^{(k)}_{i,j}(e_{1,j'})]^{\epsilon^{j'}_B\epsilon_A\epsilon^{-j'}_B}\\ &=(\delta_{i,i'+1}-\delta_{i,1})\delta_{jj'}b^{\epsilon_A\epsilon^{-j}_B}_k.
\end{split}
\end{equation}
Here we have set $e_{0,j}=1$ for convenience. By computation we have
\begin{equation} \label{act-onderiv}
\left(d_{i,j}^{(k)}\right)^{\epsilon_A}=
\begin{cases}
d_{i-1,j}^{(k)}+d_{i-1,j}^{(k-1)} & \mbox{ if } p \nmid k \mbox{ and } i \neq 1, \\
d_{i-1,j}^{(k)} & \mbox{ if } p\mid k \mbox{ and } i\neq 1, \\
-\sum_{i'= 1}^{p-1} d_{i',j}^{(k)}+ d_{i',j}^{(k-1)} & \mbox{ if } p \nmid k \mbox{ and } i=1, \\
-\sum_{i'=1}^{p-1} d_{i',j}^{(k)} & \mbox{ if } p \mid k \mbox{ and } i=1.
\end{cases}
\end{equation}

From this one can compute the matrix for $\epsilon_A$ by means of the basis $\{d^{(k)}_{ij}\}$, ordered lexicographically 
$$
d^{(0)}_{1,1},d^{(1)}_{1,1},\ldots,d^{(2(n-1))}_{1,1},d^{(0)}_{2,1},d^{(1)}_{2,1},\ldots,d^{(2(n-1))}_{2,1},\ldots.
$$
The matrix $Q_A$ in question is the square matrix of degree $(2n-1)(p-1)^2$, which is first of all a block diagonal
\begin{equation} \label{QA-def}
Q_A=\mathrm{diag}(N,\ldots,N),
\end{equation}
consisting of $p-1$ copies (indexed by $j=1,\ldots,p-1$) of a square matrix $N$ of degree $(2n-1)(p-1)$, and the matrix $N$ is the `tensor product' of $M$ in (\ref{protype}) and $E_A$ in (\ref{eqn-EA}), i.e., the matrix obtained by replacing $\pm 1$ in $M$ with $\pm E_A$.


In more algebraic terms, the $K[A]$-module structure of $\Der(\Gamma,P_n)$ is described as follows.
As in Theorem \ref{theoremnis1} (1), let $L$ be the free $\Z$-module of rank $p-1$ with the $\Z[A]$-module structure given by $\epsilon_A\mapsto M$ (with respect to some basis $\{v_1,\ldots,v_{p-1}\}$), and $W$ be the $K$-vector space of dimension $2n-1$ with the $K[A]$-module structure by $\epsilon_A\mapsto E_A$ (with respect to some basis $\{w_1,\ldots,w_{2n-1}\}$).
As in \S\ref{3.1}, we simply denote by $J_{\mu}$ the $K$-vector space $K^{\mu}$ with the $K[A]$-module structure by $\epsilon_A\mapsto J_{\mu}$, we have
\begin{equation}\label{eqn-W}
W\cong J_p^{\lf \frac{2n-1}{p}\rf} \oplus J_r
\end{equation}
as $K[A]$-module, where $r$ is the remainder of $2n-1$ modulo $p$; here, as before, we put $J_0=\{0\}$ for convenience.
Notice that, as we have seen in \S\ref{sec-5.1}, we have $P_n\cong W$ as $K[A]$-module.
Then, what we have shown above amounts to the $K[A]$-module isomorphism
\begin{equation}\label{eqn-WL}
\Der(\Gamma,P_n)\cong (W\otimes_{\Z}L)^{p-1}.
\end{equation}

To count the number of indecomposable summands, let us consider a slightly general situation as follows: 
Let $U$ be a right free $K[A]$-module, and consider the $K[A]$-module $U\otimes_{\Z}L$.
Any element $x\in U\otimes_{\Z}L$ can be written as $x=\sum^{p-1}_{i=1}a_i\otimes v_i$, where $a_i\in U$ ($1\leq i\leq p-1$).
We have
$$
x^{\epsilon_A}=\sum^{p-1}_{i=1} a^{\epsilon_A}_i\otimes v^{\epsilon_A}_i= a_1^{\epsilon_A}\otimes \sum_{i=1}^{p-1}(-v_i) +\sum_{i=2}^{p-1}a^{\epsilon_A}_i\otimes v_{i-1}=\sum^{p-2}_{i=1}(a^{\epsilon_A}_{i+1}-a^{\epsilon_A}_1)\otimes v_i-a^{\epsilon_A}_1\otimes v_{p-1}.
$$
By straightforward calculations, we see that the equation $x=x^{\epsilon_A}$ holds if and only if 
\begin{equation*}a_i-a_{i+1}^{\epsilon_A} + a_1^{\epsilon_A}=0\ (i=1,\ldots,p-2)\quad\textrm{and}\quad a_{p-1}+a_1^{\epsilon_A}=0,
\end{equation*}
if and only if
\begin{equation}\label{eqn-condition12}
a_i=a_1^{1+\epsilon^{-1}_A+\cdots+\epsilon^{1-i}_A}\ (i=1,\ldots,p-1)\quad\textrm{and}\quad a_1^{1+\epsilon^{-1}_A+\cdots+\epsilon^{1-p}_A}=0.
\end{equation}
Hence each $x\in(U\otimes_{\Z}L)^A$ is determined by its first coefficient $a_1$, which is further subject to the second condition in (\ref{eqn-condition12}).

If $U=J_{\mu}$ ($1\leq\mu\leq p$), then by a similar calculation to that in Lemma \ref{sums-powerJord}, we deduce that the second equality of (\ref{eqn-condition12}) gives a non-trivial condition only when $\mu=p$, and that
\begin{equation}\label{eqn-dimcount}
\dim_K(J_{\mu}\otimes_{\Z}L)^A=\begin{cases}\mu &\textrm{if $\mu< p$,}\\ p-1 &\textrm{if $\mu=p$.}\end{cases}
\end{equation}

\begin{proposition} \label{24}
{\rm (1)} As a $K[A]$-module, we have 
$$
J_p\otimes_{\Z}L\cong J^{p-1}_p.
$$

{\rm (2)} For $1\leq r \leq p-1$, we have
$$
J_r\otimes_{\Z}L\cong J^{r-1}_p\oplus J_{p-r}
$$
as a $K[A]$-module.
\end{proposition}

\begin{proof}
(1) First notice that any direct summand has to be of the form $J_{\mu}$ with $1\leq\mu\leq p$ (cf.\ \S\ref{3.1}); in particular, it is of dimension at most $p$.
Since the number of indecomposable summands of $J_p\otimes_{\Z}L$ is $p-1$, and since $\dim_KJ_p\otimes_{\Z}L=p(p-1)$, it has only $J_p$ as its direct summands.

(2) Consider the obvious exact sequence
$$
0\rightarrow J_r\rightarrow J_p\rightarrow J_{p-r}\rightarrow 0.
$$
Tensoring the free $\Z$-module $L$ yields the exact sequence
\[
 0 \rightarrow J_r \otimes L\rightarrow  J_p\otimes L \rightarrow J_{p-r}\otimes L \rightarrow 0,
 \]
from which we obtain 
\[
 0 \rightarrow (J_r \otimes L)^A {\rightarrow}
  (J_p\otimes L)^A  {\rightarrow} (J_{p-r} \otimes L)^A  \rightarrow H^1(A,J_r \otimes L) \rightarrow  H^1(A,J_p\otimes L).
\]
Since $J_p\otimes L\cong J^{p-1}_p$ is a free $K[A]$-module, we have $H^1(A,J_p\otimes L)=\{0\}$.
By (\ref{eqn-dimcount}), we know that $\dim_KH^1(A,J_r\otimes L)=1$.
Now if $J_r \otimes L\cong\bigoplus^m_{i=1}J_{\mu_i}$, we have 
\[
{\textstyle H^1(A,J_r \otimes L)\cong\bigoplus^m_{i=1} H^1(A,J_{\mu_i}).}
\]
Since $H^1(A,J_{\mu})$ is zero for $\mu=p$, and is $1$-dimensional for $1\leq\mu<p$, $J_r\otimes_{\Z}L\cong J^{r-1}_p\oplus J_{p-r}$ is the only possibility, for $\dim_KJ_r \otimes L=r(p-1)$.
\end{proof}

\begin{remark} We note that the results of Proposition \ref{24} can also be deduced from the case I.a) in the multiplication table of \cite{Almkvist}. Another proof of  Proposition \ref{24} (1) can be found in \cite[Lemma 7.1.13]{Campbell}. Here, we have provided a different method of proof. 
\end{remark}

\begin{remark}
The problem determining the indecomposable summands of tensor products in representation theory is called the Clebsch-Gordan problem. The case of Jordan normal form in modular representation theory was studied by many authors (see \cite{GlasbyCheryl} and references within). 
\end{remark}

\begin{corollary}\label{22}
 The $K[A]$-module structure of $\mathrm{Der}(\Gamma,P_n)$ 
is given by
\[
 \mathrm{Der}(\Gamma,P_n)\cong
 \begin{cases}
\left( J_{p}^{(p-1)\lf \frac{2n-1}{p}\rf} \oplus J_p^{r-1} \oplus J_{p-r} \right)^{p-1} & \mbox{ if } p\nmid 2n-1, \\
 \left( J_{p}^{(p-1) \frac{2n-1}{p}} \right)^{p-1} & \mbox{ if } p\mid 2n-1.
\end{cases}
\]
Here, in the first case, $r$ denotes the remainder of $2n-1$ modulo $p$.
\end{corollary}

\begin{proof}
This follows from (\ref{eqn-W}), (\ref{eqn-WL}), and Proposition \ref{24}.
\end{proof}

\section{Computations on cohomology}
\label{6}

In this section we focus on the computation of both the $K[A]$ and  
$K[A\times B] =K[N/\Gamma]=K\big[(A*B) /[A,B]\big]$-module structure 
of $H^1(\Gamma,P_n)$. We will give two different proofs of Theorem \ref{mainFree} (1).

\subsection{First Proof of Theorem \ref{mainFree} (1)}
In order to form the quotient we need to know exactly how the module of principal derivations sits inside the module of derivations. 

\begin{definition}
From now on we will denote by $h$ the endomorphism $h=\epsilon_A-1$. 
Let  $V$ be  a $K[A]$-module. We will say that an element $w\in V$ has order $\ord(w)=a$ if 
$w\in \ker h^a - \ker h^{a-1}$.  We will say that  $u$ generates a module $M$ isomorphic to  $J_a$ as 
a $K[A]$-module, if and only if the set $\{u,h(u),\ldots,h^{a-1}(u)\}$ forms a $K$-vector space basis of $M$. Notice that the generator $u$ of a module isomorphic to $J_a$ has order $a$. 
\end{definition}

Generators of direct summands $J_p$ of the space of principal derivations have order $p$ and therefore go to generators of $J_p$ summands of the space $\mathrm{Der}(\Gamma,P_n)$. Let $b_r=(T^p-T)^{\lf \frac{2n-1}{p}\rf}\binom{T}{r}$ be a generator of the $J_r$ component  of $P_n$. Let $\psi$ be the map sending $P_n \ni b$ to the principal derivation $\gamma \mapsto b^\gamma -b$. For $N_1$ and $N_2$ given by Corolllary \ref{22}, the principal derivation $\psi(b_r)$ is then decomposed as a sum \begin{equation} \label{psiexp1}\psi(b_r)= \sum_{i=1}^{N_1} a_i + \sum_{j=1}^{N_2} \beta_j \in J_p^{N_1} \oplus J_{p-r}^{N_2},\end{equation}where $a_i \in J_p$ and $b_j \in J_{p-r}$.  Since the order of $b_r$ is $r$, there is at least one summand of order $r$ and all other summands have order $\geq r$. It is clear that if $a_i$ in $J_p$ has order $t \geq r$, then there is an element $a_i' \in J_p$ such that $h^{p-t}(a_i')=a_i$.  We will prove that the elements  $\beta_j$ in eq. (\ref{psiexp1}) can also be written as $h^{p-r}(\beta_j')$. Notice that if $r=1$ this means that $\beta_j=0$, since $J_r \otimes J_{p-1} \cong J_p^{r-1} \oplus J_{p-r}$, and if $r=1$, then there is no $J_p$ direct summand in $J_1 \otimes J_{p-1}\cong J_{p-1}=J_{p-r}$.

\begin{proposition} \label{rmeg1}
The direct summand $J_r$ of $P_n$ given in eq. (\ref{eqn-EA}) is mapped  inside a direct summand of $\mathrm{Der}(\Gamma,P_n)$, isomorphic to $J_p^N$ for some $N \in \mathbb{N}$.
\end{proposition}

%
%
%

In order to prove of proposition \ref{rmeg1} we have to introduce a combinatorial point of view of the basis of $J_r \otimes J_{p-1}$. 
Consider a basis $b_1,\ldots,b_r$ of the module  $J_r$ such that 
$\epsilon_A(b_\nu)=b_\nu + b_{\nu-1}$ and a basis $\epsilon_1,\ldots,\epsilon_{p-1}$ of $J_{p-1}$, such that $\epsilon_A (\epsilon_i) =\epsilon_i+ \epsilon_{i-1}$. 
Also  $b_i$ and $\epsilon_i$ are considered to be zero for $i \leq 0$. 
The elements $\epsilon_{i,j}=b_i \otimes \epsilon_j$ form a basis for the space $J_r \otimes J_{p-1}$. 
Geometrically we consider the elements $\epsilon_{i,j}$ to form an $r \times (p-1)$ grid arranged as follows:
\begin{equation} \label{grid}
\begin{array}{ccccc}
e_{1,p-1} & e_{2,p-1} & \cdots &  e_{r-1,p-1} & e_{r,p-1} \\
e_{1,p-2} & e_{2,p-2} & \cdots &  e_{r-1,p-2} & e_{r,p-2} \\
\vdots &  \vdots & & \vdots & \vdots \\
e_{1,1} & e_{2,1} & \cdots & e_{r-1,1} & e_{r,1} 
\end{array}
\end{equation}
One can approach the action of $\epsilon_A$ on basis elements of 
$J_r \otimes J_{p-1}$ in the following way: 

\noindent 
First we compute the effect of $\epsilon_A$:
\[
\epsilon_A(e_{i,j})=(b_i + b_{i-1}) \otimes (\epsilon_{j} +
\epsilon_{j-1}).
\]
Next, we notice that  the action of a $\lambda$-power of $\epsilon_A$ is given by:  
\[
\epsilon_A^\lambda ( e_{i,j} )= 
\left( \sum_{\nu=0}^\lambda \binom{\lambda}{\nu} b_{i-\nu } \right)
\otimes
\left(
 \sum_{\mu=0}^\lambda \binom{\lambda}{\mu} \epsilon_{j-\mu }
 \right) 
\]
Finally we compute:
\[
h^k=(\epsilon_A-1)^k=\sum_{\lambda=0}^k \binom{k}{\lambda} 
(-1)^{k-\lambda} \epsilon_A^{\lambda}.
\]
Thus the action of $h^k$  on $e_{ij}$ is given by:
\begin{eqnarray}
h^k(e_{i,j}) & = & (\epsilon_A-1)^k (e_{i,j})  \nonumber \\
& =& \sum_{\lambda=0}^k \binom{k}{\lambda} 
(-1)^{k-\lambda} \epsilon_A^{\lambda} (e_{i,j})  \nonumber \\
& = & \sum_{\lambda=0}^k \binom{k}{\lambda} 
(-1)^{k-\lambda}
 \sum_{\nu=0}^\lambda \binom{\lambda}{\nu}
\sum_{\mu=0}^\lambda \binom{\lambda}{\mu} e_{i-\nu,j-\mu} \nonumber  \\
& = & \sum_{\lambda=0}^k \binom{k}{\lambda} 
(-1)^{k-\lambda}
 \sum_{\nu=0}^k \binom{\lambda}{\nu}
\sum_{\mu=0}^k \binom{\lambda}{\mu} e_{i-\nu,j-\mu} \nonumber \\
& =& \sum_{\nu=0}^i \sum_{\mu=0}^{j} 
\left( 
\sum_{\lambda=0}^k (-1)^{k-\lambda}
\binom{\lambda}{\mu}
\binom{\lambda}{\nu}
\binom{k}{\lambda}
\right)
e_{i-\nu,j-\mu}. \label{exptok}
\end{eqnarray}
In eq. (\ref{exptok}) above, we have extended the summation up to $k$
since $\binom{I}{J}=0$ for $J>I$.  
We define
\[
\delta^k_{\nu,\mu}:=
\sum_{\lambda=0}^k (-1)^{k-\lambda}
\binom{\lambda}{\mu}
\binom{\lambda}{\nu}
\binom{k}{\lambda} 
\]
%
%
%
\begin{lemma}\label{delta-comp} We have
\begin{equation*}
\delta_{\nu,\mu}^k  = 
\binom{k}{\nu} \binom{\nu}{\mu-k+\nu} = 
\binom{k}{\nu} \binom{\nu}{k-\mu }.
\end{equation*}
\end{lemma}
\begin{proof} Indeed, using \cite[Equation 3.49]{Gould} we get
	\begin{align*}\delta^k_{\nu,\mu}& =
	\sum_{\lambda=0}^k (-1)^{k-\lambda}
	\binom{k}{\lambda} 
	\binom{\lambda}{\nu}
	\binom{\lambda}{\mu}
	 = \sum_{\lambda=0}^k (-1)^{k-\lambda} 
	\binom{k}{\nu}
	\binom{k-\nu}{k-\lambda}
	\binom{\lambda}{\mu} \\
&= \binom{k}{\nu}\sum_{\lambda=0}^k (-1)^{k-\lambda}
	\binom{k-\nu}{k-\lambda}
	\binom{\lambda}{\mu} 
	= \binom{k}{\nu}\sum_{i=0}^{k-\nu}{(-1)}^i \binom{k-\nu}{i}\binom{k-i}{\mu} \\
&= \binom{k}{\nu} \binom{\nu}{\mu-k+\nu},
\end{align*}as needed.
\end{proof}
Combining Lemma \ref{delta-comp} with eq. (\ref{exptok}) we deduce that
\begin{equation}\label{ha-ypol} \boxed{
h^k(e_{i,j})=
\sum_{\nu=0}^i \sum_{\mu=0}^{j} 
\delta_{\nu,\mu}^k
e_{i-\nu,j-\mu}=
\sum_{\nu=0}^i \sum_{\mu=0}^{j} 
\binom{k}{\nu} \binom{\nu}{k-\mu }
e_{i-\nu,j-\mu}.
}
\end{equation}

We will follow a different approach for computing the coefficients 
of the basis elements which appear in the expansion of $h^k (e_{i,j})$. 
Notice first that 
\[
h(e_{i,j})= e_{i-1,j} + e_{i,j-1} + e_{i-1,j-1}. 
\]
This means that an element $e_{i,j}$ is moved by $h$ to the sum 
of three elements in the grid of equation (\ref{grid}) which lie 
just below to the right and right and below. 
\begin{equation} \label{3dir}
\xymatrix@C=0.4cm{
e_{i-1,j} & e_{i,j} \ar[d] \ar[l] \ar[dl] \\
e_{i-1,j-1} & e_{i,j-1} 
} 
\end{equation}
Assume that all the above arrows have length $1$. 
For the $h^2(e_{i,j})$ we have 
\[
\xymatrix@C=0.4cm{
e_{i-2,j} & e_{i-1,j} \ar[d] \ar[l] \ar[dl] & e_{i,j} \ar[d] \ar[l] \ar[dl] \\
e_{i-2,j-1} & e_{i-1,j-1} \ar[d] \ar[l] \ar[dl] & e_{i,j-1} \ar[d] \ar[l] \ar[dl] \\
e_{i-2,j-2} & e_{i-2,j-2} & e_{i,j-2}
} 
\]
On the other hand side we compute 
\begin{eqnarray*}
h^2(e_{i,j}) &= & h(e_{i-1,j}) + h(e_{i,j-1}) + h(e_{i-1,j-1}) \\
&= & (e_{i-2,j} + e_{i-1,j-1} + e_{i-2,j-1}) + \\
 &  & + (e_{i-1,j-1} + e_{i,j-2} + e_{i-1,j-2}) \\ 
 &  & + (e_{i-2,j-1} + e_{i-1,j-2} + e_{i-2,j-2}) \\
 & = &  e_{i-2,j} + 2 e_{i-1, j-1} +2 e_{i-2,j-1} + e_{i,j-2} + 
 2e_{i-1,j-2}.
\end{eqnarray*}
By induction we can prove the following 
\begin{lemma} \label{countPaths} The coefficient of $e_{\nu,\mu}$ in 
$h^k(e_{i,j})$ is the number of paths of length $k$ we can form from 
$e_{i,j}$ to $e_{\nu,\mu}$ if from each intermediate node we can go in three
directions of eq. (\ref{3dir}).
\end{lemma}

\begin{lemma} \label{s-line-move}
Fix an $i,j$, where $i\in\{1,\dots ,r\}$ and $j\in\{1,\dots ,p-1\}$.
The image of $h^{k}(e_{i,j})$ is contained in the space generated by vertices in the grid of eq. (\ref{grid}), which  lie left and down of the line $s+t=i+j-k$, i.e. it is contained in the vector space generated by the elements $e_{s,t}$ where $s+t \leq i+j-k$. 
\end{lemma}
\begin{proof}
This is immediate by either the geometric viewpoint explained in Lemma \ref{countPaths} or by the computation given in  eq. (\ref{ha-ypol}).
\end{proof}

\begin{proposition}\label{cor29}The kernel of $h^{k}$ is the vector space generated by the elements $e_{i,j}$ where $i+j\leq k+1$.
\end{proposition}
\begin{proof} 
	Consider an element $e_{s,t}$ in the expression of $h^k(e_{i,j})$ in eq. (\ref{ha-ypol}). It follows from Lemma \ref{s-line-move} that $s+t\leq i+j -k\leq 1$ and hence either $s\leq 0$ or $t\leq 0$. Thus, $e_{s,t}$ is the zero element and $h^k(e_{i,j})=0$, as needed.
	Conversely, if $e_{i,j}$ is a basis element such that $i+j\geq k+2$, then there exists always a lattice path of lenght $k$, where the allowed moves are down and left (see figure \ref{fig:LatticePath}), which sends $e_{i,j}$ to a non-zero basis element $e_{s,t}$. It follows that $h^k(e_{i,j})\neq 0$ and hence $e_{i,j}$ does not belong to the kernel of $h^{k}$.
\begin{figure}[h]
\begin{tikzpicture}[scale=0.7, transform shape]

\def\dx{1.5} 
\def\dy{1.5} 

\draw[very thick,red] (0*\dx, 3*\dy) -- (3*\dx, 0*\dy) node[right, below] {$\ell:i+j=k+1$};
\node at (\dx,0.5*\dx) {$i+j \leq k+1$};

\foreach \x in {0,...,3} {
    \foreach \y in {0,...,4} {
                \fill[black] (\x*\dx, \y*\dy) circle (3pt);
    }
}

\node[circle, blue!50] (A) at (3*\dx, 4*\dy) {};

\node[circle] (B) at (2*\dx, 4*\dy) {};
\node[circle] (C) at (2*\dx, 3*\dy) {};
\node[circle] (D) at (3*\dx, 3*\dy) {};

\node[circle] (F) at (2*\dx, 0*\dy) {};
\node[circle] (E) at (3*\dx, 1*\dy) {};
\node[circle] (K) at (3*\dx, 0*\dy) {};
\node[circle] (KK) at (\dx, 0*\dy) {};

\draw[->, dotted,thick] (A) -- (B);
\draw[->, dotted,thick] (A) -- (C);
\draw[->, dotted,thick] (A) -- (D);

\draw[->,blue,thick] (C) -- (F); 
\draw[->,blue,thick] (E) -- (K) -- (KK);

\pgfdeclareverticalshading{greenblue}{\dy*4}{
    color(0pt)=(green);
    color(\dy*4)=(blue)
}

 \fill[shading = axis,rectangle, left color=blue, right color=yellow,shading angle=30, opacity=0.1] (0,0) -- (0*\dx, 3*\dy) -- (3*\dx, 0*\dy) -- cycle;


\end{tikzpicture}

\caption{ \label{fig:LatticePath} Two lattice paths in blue color} 

\end{figure}
\end{proof}

\begin{figure}
\scalebox{0.5}{
\begin{tikzpicture}[scale=1]

\def\dx{1.5} 
\def\dy{1.5} 
\def\P{11} 
\def\r{4} 

\draw[very thick,red] (0*\dx, 3*\dy) -- (3*\dx, 0*\dy) node[right, below] {$\ell_1:i+j=r+1$};
\node at (1.2*\dx,0.5*\dy) {$\mathrm{ker} h^r: i+j \leq r+1$};
\node at (1.6*\dx,3.5*\dy) {$\mathrm{ker} h^{p-r}: i+j \leq p-r+1$};

\draw[very thick,red ] (0*\dx, 7*\dy) -- (3*\dx, 4*\dy) node[near start, above right] {$\ell_2:i+j=p-r+1$};

\foreach \x in {0,...,3} {
    \foreach \y in {0,...,8} {
                \fill[black] (\x*\dx, \y*\dy) circle (3pt);
    }
}

\node[circle, blue!50] (A) at (3*\dx, 4*\dy) {};

\node[circle] (B) at (2*\dx, 4*\dy) {};
\node[circle] (C) at (2*\dx, 3*\dy) {};
\node[circle] (D) at (3*\dx, 3*\dy) {};

\node[circle] (F) at (2*\dx, 0*\dy) {};
\node[circle] (E) at (3*\dx, 1*\dy) {};
\node[circle] (K) at (3*\dx, 0*\dy) {};
\node[circle] (KK) at (\dx, 0*\dy) {};




 \fill[shading = axis,rectangle, left color=blue, right color=yellow,shading angle=30, opacity=0.15] (0,0) -- (0*\dx, 3*\dy) -- (3*\dx, 0*\dy) -- cycle;

\fill[shading = axis,rectangle, left color=green, right color=yellow,shading angle=30, opacity=0.07] (0,0) -- (0, 7*\dy) --(1*\dx, 6*\dy) -- (3*\dx, 4*\dy) --  (3*\dx,0) -- cycle;

\draw[decorate,decoration={brace,amplitude=10pt},thick] (-\dx, 0) -- ( -\dx, 8*\dy) node[midway, left] {$p-1 \;\;\;$};
\draw[decorate,decoration={brace,amplitude=10pt,mirror},thick] (0*\dx, -0.5*\dy) -- (3*\dx, -0.5*\dy) node[midway, below=10pt] {$r$};
\draw[decorate,decoration={brace,amplitude=10pt},thick] (3.2*\dx, 4*\dy) -- (3.2*\dx, 1*\dy) node[midway, above=2pt, right=10pt] {$p-2r$};
\draw[decorate,decoration={brace,amplitude=10pt},thick] (4.1*\dx, 7*\dy) -- (4.1*\dx, 0*\dy) node[midway, right=8pt] {$p-r$};
\draw[decorate,decoration={brace,amplitude=10pt},thick] (-0.3*\dx, 0*\dy) -- (-0.3*\dx, 3*\dy) node[midway, left=10pt] {$r$};
\draw[decorate,decoration={brace,amplitude=10pt},thick] (-0.3*\dx, 4*\dy) -- (-0.3*\dx, 8*\dy) node[midway, above= 23pt, left=16pt, rotate=90] {$p-1-r$};

\end{tikzpicture}
}
\scalebox{0.5}{
\begin{tikzpicture}[scale=1]

\def\dx{1.5} 
\def\dy{1.5} 

\draw[very thick,red] (0*\dx, 6*\dy) -- (6*\dx, 0*\dy) node[midway, above right] {$\ell_1:i+j=r+1$};
\node at (1.2*\dx,3.5*\dy) {$\mathrm{ker} h^r: i+j \leq r+1$};
\node at (1.6*\dx,0.5*\dy) {$\mathrm{ker} h^{p-r}: i+j \leq p-r+1$};

\draw[very thick,red ] (0*\dx, 3*\dy) -- (3*\dx, 0*\dy) node[right, below] {$\ell_2:i+j=p-r+1$};

\foreach \x in {0,...,6} {
    \foreach \y in {0,...,7} {
                \fill[black] (\x*\dx, \y*\dy) circle (3pt);
    }
}

\node[circle, blue!50] (A) at (3*\dx, 4*\dy) {};

\node[circle] (B) at (2*\dx, 4*\dy) {};
\node[circle] (C) at (2*\dx, 3*\dy) {};
\node[circle] (D) at (3*\dx, 3*\dy) {};

\node[circle] (F) at (2*\dx, 0*\dy) {};
\node[circle] (E) at (3*\dx, 1*\dy) {};
\node[circle] (K) at (3*\dx, 0*\dy) {};
\node[circle] (KK) at (\dx, 0*\dy) {};




 \fill[shading = axis,rectangle, left color=blue, right color=yellow,shading angle=30, opacity=0.07] (0,0) -- (0*\dx, 6*\dy) -- (6*\dx, 0*\dy) -- cycle;

 \fill[shading = axis,rectangle, left color=blue, right color=yellow,shading angle=30, opacity=0.07] (0,0) -- (0*\dx, 3*\dy) -- (3*\dx, 0*\dy) -- cycle;


\draw[decorate,decoration={brace,amplitude=10pt},thick] (-0.8*\dx, 0) -- ( -0.8*\dx, 7*\dy) node[midway, left] {$p-1 \;\;\;$};
\draw[decorate,decoration={brace,amplitude=10pt,mirror},thick] (0*\dx, -0.5*\dy) -- (6*\dx, -0.5*\dy) node[midway, below=10pt] {$r$};
\draw[decorate,decoration={brace,amplitude=10pt},thick] (6.2*\dx, 3*\dy) -- (6.2*\dx, 0*\dy) node[midway, right=10pt] {$p-r$};
\draw[decorate,decoration={brace,amplitude=10pt},thick] (-0.3*\dx, 0*\dy) -- (-0.3*\dx, 6*\dy) node[midway, left=10pt] {$r$};

\end{tikzpicture}
}

\caption{$J_r \otimes J_{p-1}$ for $r<p-r$ (left) and $r>p-r$ (right). \label{JrJp}}

\end{figure}
Consider again the map $\psi$ sending an element $x \in P_n$ to the principal derivation 
$\gamma \mapsto x^\gamma-x$. 

We would like to prove that the element $b_{2n-2}\in P_n$ of order $r$ generating the direct summand $J_r$ of $P_n$ is mapped to  $\psi(b_{2n-2})=h^{p-r}(y)$ for some element $y\in J_r\otimes J_{p-1}$. 


Since the map $\psi$ is $h$-equivariant,  it follows that $\psi(b_{2n-2})$ lies in the kernel of $h^r$, which is the space generated by elements left and below of line $\ell_1$, given by vertices $e_{i,j}$ such that $i+j=r+1$. 
It follows from Proposition \ref{cor29} that $\textrm{ker}(h^{r})$ is the vector space generated by vertices in the $r\times r$ triangle in the lower left corner of the $r\times (p-1)$ grid, including those on the line $\ell_1$ (see Figure 2) and we will prove that $\textrm{ker}(h^{r})\subseteq \textrm{Im}(h^{p-r})$. It suffices to show that $\textrm{dimker}(h^r)\leq \textrm{dimIm}(h^{p-r})$.
Moreover, the kernel of the map ${h}^{p-r}$ consists of elements which lie left and down of line $\ell_2$, including those on the line $\ell_2$, where $\ell_2$ is the line consisted by elements $e_{i,j}$ such that $i+j=p-r+1$. We distinguish the following cases:

{\bf Case 1:} Suppose that $1< r<p-r$. Then, the kernel of $h^{p-r}$ has dimension:
\[
(p-r)+(p-r-1) +\cdots + (p-2r+1) =(p-2r)\cdot r + \frac{r(r+1)}{2}
\] (see Figure 2).
Consequently we have $$\dim \mathrm{Im}({h}^{p-r})=r(p-1)-\dim \mathrm{ker}({h}^{p-r})=\frac{3(r^2-r)}{2}$$ Since $\dim \mathrm{ker}({h}^{r})=1+2+\cdots+r=\frac{r(r+1)}{2}$ and $r\geq 2$, we deduce that $\dim \mathrm{Im}({h}^{p-r})\geq \dim \mathrm{ker}({h}^{r})$, as needed.


{\bf Case 2:} Suppose that $p-r<r$.
This case differs from the previous one, since here the line $\ell_2$ is to the left of the line $\ell_1$. 
 The kernel of the map ${h}^{p-r}$ is generated by the vertices of the $(p-r) \times (p-r)$ triangle of points which are below and to the left of line $\ell_2$, including those on $\ell_2$, which is given by equation $i+j=p-r+1$, and has dimension $1+2+\cdots+ (p-r)=(p-r)(p-r+1)/2$ (see Proposition \ref{cor29}). 
It follows that 
$$
\dim \mathrm{Im}({h}^{p-r})=r(p-1)-\dim \mathrm{ker}({h}^{p-r})=\frac{4rp-r-r^2-p^2-p}{2}.
$$
 Similarly, the kernel of the map ${h}^{r}$ is generated by the vertices of the $r \times r$ triangle of points which are below and to the left of line $\ell_1$, including those on $\ell_1$, given by equation $i+j=r+1$, and has dimension $1+2+\cdots+ r=r(r+1)/2$ (see Proposition \ref{cor29}). Therefore, 
 \begin{equation}\label{eq28}
	\dim \mathrm{Im}({h}^{p-r})-\dim \mathrm{ker}({h}^{r})=-r^2+(2p-1)r-\frac{p^2+p}{2}.
\end{equation} 
Since $r\leq p-1$, the condition $p-r<r$ yields $p\geq 3$. Moreover, if $p=3$ then $r=2$ and hence $\dim \mathrm{Im}({h}^{p-r})=\dim \mathrm{ker}({h}^{r})=3$. We assume now that $p\geq 5$. Then, the polynomial in equation (\ref{eq28}) has two real roots $$\rho_{1}=\frac{2p-1-\sqrt{2p^2-6p+1}}{2},\,\rho_{2}=\frac{2p-1+\sqrt{2p^2-6p+1}}{2},$$ where $\rho_1< \frac{p}{2}$ and $\rho_2>p-1$. Since $\frac{p}{2}<r\leq p-1$, we conclude that $\dim \mathrm{Im}({h}^{p-r})>\dim \mathrm{ker}({h}^{r})$, as needed.  
  



{\bf Case 3} Suppose that $r=1$. We will treat this special case differently.
Recall that we have the basis $b_0,\ldots,b_{2n-2}$ for the space $P_n$ as defined in eq. (\ref{bk}) and the elments $d_{i,j}^{(k)}$, for $0\leq k \leq 2(n-1)$, $1\leq i,j \leq p-1$ as a basis for $\mathrm{Der}(\Gamma,P_n)$ as given in eq. (\ref{eq:dijdef}). 

Since $2n-1$ has remainder $r=1$ when divided by $p$, we have that $2(n-1) =p q$ is divided by $p$ and
$b_{2(n-1)}=b_{pq}=(T^p-T)^q$ according to eq. (\ref{bk}).

 Consider the principal derivation $\psi(b_{pq})$ sending $\gamma_{i,j}=[e_A^i,e_B^j]\mapsto b_{pq}^{\gamma_{i,j}}-b_{pq}$. 
 We have
\begin{equation}
 \label{pdev-1}
  b_{pq}^{\epsilon_A^i \epsilon_B^j \epsilon_A^{-i} \epsilon_B^{-j}} 
  -b_{pq}=
  \sum_{\lambda=0}^{pq} a(i,j)^{(pq)}_\lambda b_\lambda^{e_B^{-j}}.
\end{equation} 
By applying $e_B^j$ in eq. (\ref{pdev-1}), using that $b_{2n-2}$ is $\epsilon_A$-invariant we obtain 
\begin{equation} 
\label{r1case1} 
b_{pq}^{\epsilon_B^j \epsilon_A^{-i}} -b_{pq}^{\epsilon_B^j}
=\sum_{\lambda=0}^{pq} a(i,j)^{(pq)}_\lambda b_\lambda. 
\end{equation}
Using the matrix form for $e_B$ given in eq. (\ref{matrices}) and the action on polynomials as given in eq. (\ref{pol-action}) we compute  
\[
e_B^j =
\begin{pmatrix}
1 & 0 \\
js & 1
\end{pmatrix}
\]
and
\begin{align*}
b_{pq}^{e_B^j} &=(sjT+1)^{pq} \frac{(1- (sjT+1)^{p-1})^q}{ (1+sjT)^{pq}}
\\
&=
\left(
\sum_{\nu=1}^{p-1} \binom{p-1}{\nu} (sjT)^\nu
\right)^q.
\end{align*}
Also $b_{pq}^{e_B^j e_A^{-i}}$ is again a polynomial of degree at most $(p-1)q$. 
The left hand side of equation (\ref{r1case1}) is a polynomial of degree at most $(p-1)q$, that is it cannot  involve elements $b_{pq}$ as summands, i.e. 
$a(i,j)^{(pq)}_{pq}=0$. Thus the expression $\psi(b_{pq})$ does not involve basis elements $d_{i,j}^{(pq)}$,  which give rise to the $J_{p-1}$ summands in $\mathrm{Der}(\Gamma,P_n)$.  

\smallskip


\medskip
We have seen that $\psi(b_{2n-2})={h}^{p-r}(y)$ for some $y$ of order $p$, therefore $\psi(b_{2n-2})$ is inside a $J_p$ direct summand of $\mathrm{Der}(\Gamma,P_n)$.
Indeed, the element $y$ can be expressed as a linear combination
\[
y =\sum_{i=1}^{N_1} \lambda_i a_i +\sum_{j=1}^{N_2} \mu_j b_j, 
\]
where $a_i$ are generators of $J_p$ summands and $b_j$ are generators of $J_r$ summands.
Since $y$ has order $p$ at least one of the coefficients $\lambda_{i_0}$ is not zero, and by basis exchange lemma of linar algebra we see that $\{a_1,\ldots,a_{i_0-1},y,a_{i_0+1},\ldots,\}$ are also generators of the $J_p$ summands.

 We can now proceed to the computation of the quotient  
$\mathrm{Der}(\Gamma,P_n)/\mathrm{PDer}(\Gamma,P_n)$ is isomorphic to 
\begin{align}
H^1(\Gamma,P_n)&\cong \Der(\Gamma,P_n)/\mathrm{PDer}(\Gamma,P_n)
 \nonumber\\&\cong  J_p^{(p-1) \lf \frac{(2n-1)(p-1)}{p} \rf-1 
 - \lf \frac{2n-1}{p} \rf
 } \oplus J_p/J_r  \oplus  J_{p-r}^{p-1}
\nonumber \\
& \cong 
 J_p^{(p-1)(2n-1) - p\lc \frac{2n-1}{p}\rc}   \oplus  J_{p-r}^{p}.  \label{coho-A}
\end{align}


\subsection{Second Proof of Theorem \ref{mainFree} (1) }
Our second proof of Theorem \ref{mainFree} (1) uses the algebraic  theory of  curves $(x^p-x)(y^p-y)=c$.

Recall that the Artin-Schreier-Mumford curves we are studying, are unifor\-mized by $\Gamma=[A,B]\cong[\mathbb{Z}/p\mathbb{Z},  \mathbb{Z}/p\mathbb{Z}]$, and are 
given by the  following algebraic model
\[
X_c: (x^{p}-x)(y^{p}-y)=c,\]
for some $c\in K$, $|c|<1$. 
The group $\Z/p\Z$ is a  subgroup of the automorphism group and  acts for instance on the curve $X_c$ by letting  the generator $\tau$ of $\Z/p\Z$ to act on the curve in terms of the map $(x,y)\mapsto(x,y+1)$. We call $Y$  the quotient curve $X_c/\langle \tau \rangle$. Note that $Y$ is isomorphic to $\mathbb{P}^1,$ and hence  the genus $g_Y$ is zero.

Denote the function field of $ X_c$, for a fixed value of $|c|<1$, by $F$. The extension $F/K(x)$ 
is a cyclic extension of the  rational function field
$K(x)$. In this extension  $p$-places $P_{i}=(x-i)$, $1\leq i \leq p-1$ of $K(x)$ are weakly   ramified. 
The different is:
\[\mathrm{Diff}_{F/K(x)}=\sum_{i=0}^{p-1}2(p-1)P_{i}.\]
We will employ the  results of S. Nakajima  \cite{Nak:86}. We have the following decomposition 
in terms of indecomposable modules
\[
 H^0(X,\Omega_X^{\otimes n})= \bigoplus_{i=1}^p  m_i J_i,
\]
and the coefficients are given by \cite[Theorem 1]{Nak:86} (we have used the equivalent expression given at the final displayed equation of the proof of this theorem):
\[
 m_p=(2n-1)(g_Y-1) +\sum_{i=1}^p \lf \frac{n_i-(p-1)N_i}{p} \rf,
\]
where $N_i=1$ (ordinary curves) and $n_i=2n(p-1)$, see \cite[Section 4]{KoJPAA06}. 

Since $g_Y=0$ we compute:
\begin{eqnarray*}
 m_p &= & (2n-1)(g_Y-1)+ p \lf \frac{2n(p-1)-(p-1)}{p} \rf \\
     &= & (2n-1)(g_Y-1) + p (2n-1) - p \lc \frac{2n-1}{p} \rc \\
 &\stackrel{g_Y=0}{=}& (p-1) (2n-1) - p \lc \frac{2n-1}{p} \rc.
\end{eqnarray*}
For $1\leq j \leq p-1$ the coefficients $m_j$ are given by the following formulas \cite[Theorem 1]{Nak:86}:
\begin{eqnarray*}
 \frac{m_j}{p} & = &-\lf \frac{n_i-j N_i}{p}\rf
+
\lf \frac{n_i-(j-1)N_i}{p}\rf\\
 &=& 
-\lf \frac{2n(p-1)-j}{p}\rf
+\lf \frac{2n(p-1)-(j-1)}{p}\rf \\
&=&
-\lf \frac{-2n-j}{p} \rf +
\lf \frac{-2n-(j-1)}{p}\rf\\
&=&
\lc \frac{2n+j}{p} \rc - \lc \frac{2n+j-1}{p} \rc.
\end{eqnarray*}
We now notice that for $0 \leq j \leq p-1$ the above expression is zero unless
$p \mid 2n+j-1$.

We write $2n-1=\lf \frac{2n-1}{p} \rf p + r$, and we see that $m_j=0$ unless
\[
 j=p-r=p-(2n-1)+\lf \frac{2n-1}{p} \rf p.
\]
Notice that if $p> 2n-1$ then $j=p-(2n-1)$. So we have that 
\begin{equation} \label{H1}
 H^1(\Gamma,P_n)=H^0(X,\Omega_X^{\otimes n})= K[A]^{ (p-1) (2n-1) - p \lc \frac{2n-1}{p} \rc } \bigoplus  J_{p-r}^p.
\end{equation}

%
%
%

\subsection{Using the theory  of B. K\"ock. Study of the $K[A\times B]$-module 
structure} \label{Koeck}

In this section we will employ the results of B. K\"ock on the projectivity of the cohomology groups of certain sheaves in the weakly ramified case. 
Consider a $p$-group $G$ and   the cover $\pi:X \rightarrow X/G=:Y$.
For every point $P$ of $X$ we consider the local uniformizer $t$ at $P$, the stabilizer 
$G(P)$ of $P$  and assign a sequence of ramification groups 
\[
G_i(P)=\{\sigma \in G(P): v_P(\sigma(t)-t) \geq i+1\}.
\]
Notice that $G_0(P)=G(P)$ for $p$-groups, see \cite[chap. IV]{SeL}.
Let $e_i(P)$ denote the order of $G_i(P)$. 
We use the notation $X_{\mathrm{ram}}$ for the set of 
ramification points.
We will say that the cover $X \rightarrow X/G$
is weakly ramified if all  $e_i(P)$ vanish for $i \geq 2$.  All Mumford curves $X$  are ordinary and in $X \rightarrow X/\A(X)$ only weak ramification is allowed \cite{CKK}. We denote by $\Omega_X$ the sheaf of differentials on $X$ and by $\Omega_X(D)$ the 
sheaf $\Omega_X \otimes_{\mathcal{O}_X} \mathcal{O}_X(D)$. For a divisor $D=\sum_{P\in X} n_P P$
we denote by $D_{\mathrm{red}}=\sum_{P\in X:n_P \neq 0} P$ the associated reduced divisor. We will also denote by 
\[
L(D)=H^0(X,\mathcal{O}_{X}(D))=\{D +(f) >0: f \in F_X \} \cup \{0\},
\]
where $F_X$ is the function field of the curve $X$.
The ramification divisor equals $R=\sum_{P \in X}  \sum_{i=0}^\infty \big(e_i(P)-1 \big)$. Finally, $\Sigma$ denotes the skyscraper sheaf defined by the short exact sequence:
\begin{equation} \label{AA6} 0 \rightarrow \Omega_X^{\otimes n} \rightarrow \Omega_X^{\otimes n}\big( (2n-1) R_{\mathrm{red}} \big)
 \rightarrow \Sigma \rightarrow 0.
\end{equation}

\begin{lemma} For $n>1$
the cohomology group $H^1(X,\Omega_X^{\otimes n})=0$. 
\end{lemma}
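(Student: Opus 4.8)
The plan is to deduce the vanishing from Serre duality, reducing it to the triviality of the space of sections of a line bundle of negative degree. First I would recall that $X$ is a smooth projective curve of genus $g\geq 2$ over the algebraically closed field $K$, so Serre duality identifies $H^1(X,\Omega_X^{\otimes n})$ with the $K$-linear dual of
\[
H^0\bigl(X,\Omega_X\otimes(\Omega_X^{\otimes n})^{\vee}\bigr)=H^0\bigl(X,\Omega_X^{\otimes(1-n)}\bigr).
\]
Thus it suffices to show that the line bundle $\Omega_X^{\otimes(1-n)}$ carries no nonzero global section.

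Next I would compute the degree of this bundle. Since $\deg\Omega_X=2g-2$, we have $\deg\Omega_X^{\otimes(1-n)}=(1-n)(2g-2)$. The standing hypotheses $n>1$ and $g\geq 2$ give $1-n\leq -1$ and $2g-2>0$, so this degree is strictly negative. A line bundle of negative degree on a smooth projective curve admits no nonzero global section, because a nonzero section would produce an effective divisor whose degree equals the (negative) degree of the bundle, which is impossible. Hence $H^0(X,\Omega_X^{\otimes(1-n)})=0$, and by the duality above $H^1(X,\Omega_X^{\otimes n})=0$.

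There is no genuine obstacle here: the whole content is the observation that, under $n>1$ and $g\geq 2$, the Serre-dual bundle has negative degree, and the argument uses neither the ordinariness of $X$ nor the weak ramification of the cover. It is worth emphasizing that the hypothesis $n>1$ is essential: for $n=1$ Serre duality instead yields $H^1(X,\Omega_X)\cong H^0(X,\mathcal{O}_X)^{\vee}=K\neq 0$, so the lemma is genuinely a statement about polydifferentials of weight $n\geq 2$, which is exactly the regime relevant to the Köck long exact sequence associated to the skyscraper sheaf $\Sigma$ introduced above.
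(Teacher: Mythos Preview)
Your proof is correct and follows essentially the same route as the paper's: Serre duality identifies $H^1(X,\Omega_X^{\otimes n})$ with the dual of $H^0(X,\Omega_X^{\otimes(1-n)})=L((1-n)K)$, which vanishes. You are in fact more explicit than the paper, which simply asserts $\dim L(K-nK)=0$ without spelling out the negative-degree argument; your observation that the hypothesis $n>1$ is essential (and that the lemma fails for $n=1$) is a useful clarification the paper leaves implicit.
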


\begin{proof}
There is a correspondence of sheaves between divisors and 1-dimensional $\mathcal O_X$-modules, $D\mapsto\mathcal O_X(D)$. 
The divisor of any differential is a  canonical divisor $K$ and $\Omega_X$ can be identified with  $\mathcal O(K)$. 

Recall that Serre duality asserts:
\[\dim H^1(X,\mathcal O_X(D))=\dim H^0(X,\Omega_X \otimes \mathcal O_X(D)^{-1} ).\]
Hence we find that
\[\dim H^1(X,\Omega_X^{\otimes n})= \dim H^0 (X, \Omega_X \otimes \Omega_X^{ - \otimes n}).\]
The sheaf $\Omega_X \otimes \Omega_X^{ - n}$ corresponds to the 
$\mathcal{O}_X$-module 
$\mathcal O_X(K-nK)$ and since  \[H^0(X,\mathcal O_X(K-nK)) = L( K-nK),\] it holds that 
\[\dim H^1(X,\Omega_X^{\otimes n})=\dim L(K-nK)=0.\]
\end{proof}

%

Now we apply the functor of global sections to the short exact sequence in (\ref{AA6}) and obtain the following short 
exact sequence:
\begin{equation} \label{BK-theory}
 0 \rightarrow H^0(X,\Omega_X^{\otimes n}) \rightarrow  
 H^0\big(X,\Omega_X^{\otimes n}\big( (2n-1) R_\mathrm{red}  \big) \big) \rightarrow 
 H^0(X,\Sigma) \rightarrow H^1(X,\Omega_X^{\otimes n})=0.
\end{equation}

\begin{theorem} \label{BK-dim} 
The $K[G]$-module $H^0\big(X,\Omega_X^{\otimes n}\big( (2n-1) R_\mathrm{red}  \big) \big)$  is a free 
$K[G]$-module of rank $(2n-1)(g_Y-1+r_0)$, where $r_0$ denotes the cardinality of 
$X^G_\mathrm{ram}=\{P \in X/G: e(P) > 1\}$, and $g_Y$ denotes the genus of the 
quotient curve $Y=X/G$.
\end{theorem}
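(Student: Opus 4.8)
The plan is to combine two independent ingredients: a structural (projectivity) statement coming from B. K\"ock's theory of weakly ramified covers, and a numerical (dimension) computation via Riemann--Roch and Riemann--Hurwitz. Once projectivity over $K[G]$ is established, freeness is automatic, because $G$ is a $p$-group and $\mathrm{char}\,K=p$: the group algebra $K[G]$ is then local Artinian, so every projective $K[G]$-module is free. Hence the only genuine task left is to pin down the rank, which is simply $\dim_K H^0\big(X,\Omega_X^{\otimes n}((2n-1)R_\mathrm{red})\big)$ divided by $|G|$.

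For projectivity I would invoke K\"ock's theorem \cite{Koeck:04} directly. The cover $\pi:X\to Y=X/G$ is weakly ramified (recall that Mumford curves are ordinary and admit only weak ramification in $X\to X/\A(X)$), the divisor $(2n-1)R_\mathrm{red}$ is $G$-invariant since $R_\mathrm{red}$ is, and the coefficient $2n-1$ on the reduced ramification is precisely the one for which the local representations at the ramified points become free. Under exactly these hypotheses K\"ock concludes that $H^0\big(X,\Omega_X^{\otimes n}((2n-1)R_\mathrm{red})\big)$ is a projective, hence free, $K[G]$-module. I regard the careful matching of our divisor to the hypotheses of K\"ock's theorem as the conceptual heart of the argument, and the main obstacle; the remaining steps are essentially bookkeeping.

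For the dimension I would read off the short exact sequence (\ref{BK-theory}). Since $H^1(X,\Omega_X^{\otimes n})=0$ by the preceding lemma, taking dimensions gives
\[
\dim_K H^0\big(X,\Omega_X^{\otimes n}((2n-1)R_\mathrm{red})\big)=\dim_K H^0(X,\Omega_X^{\otimes n})+\dim_K H^0(X,\Sigma).
\]
The first term equals $(2n-1)(g_X-1)$, since it is $\dim H^1(\Gamma,P_{2(n-1)})$, computed earlier; the skyscraper sheaf $\Sigma$ has length $2n-1$ at each ramified point of $X$ (the twist raises the order of pole by $2n-1$ and the residue field is $K$), so $\dim_K H^0(X,\Sigma)=(2n-1)\deg R_\mathrm{red}$. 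Thus the total is $(2n-1)\big[(g_X-1)+\deg R_\mathrm{red}\big]$.

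It then remains to show $(g_X-1)+\deg R_\mathrm{red}=|G|(g_Y-1+r_0)$. Here I would use that for a $p$-group cover $G_0(P)=G_1(P)=G(P)$ (the tame quotient $G_0/G_1$ of a $p$-group is trivial), so weak ramification forces the different exponent $d_P=2(e(P)-1)$ with $e(P)=|G(P)|$. Grouping the fibre over each branch point $Q\in X^G_\mathrm{ram}$ into its $|G|/e_Q$ points, Riemann--Hurwitz yields
\[
g_X-1=|G|(g_Y-1)+|G|\sum_{Q}\Big(1-\tfrac{1}{e_Q}\Big),
\]
while $\deg R_\mathrm{red}=\sum_{Q}|G|/e_Q=|G|\sum_{Q}\tfrac{1}{e_Q}$. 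Adding these two expressions, the $1/e_Q$ contributions cancel and the sum collapses to $|G|(g_Y-1+r_0)$, where $r_0=\#X^G_\mathrm{ram}$. Dividing the $K$-dimension $(2n-1)|G|(g_Y-1+r_0)$ by $|G|$ gives the claimed free rank $(2n-1)(g_Y-1+r_0)$.
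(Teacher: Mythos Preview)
Your proposal is correct and follows essentially the same strategy as the paper: invoke K\"ock's projectivity criterion, use that projective $K[G]$-modules are free when $G$ is a $p$-group, and then count dimensions and divide by $|G|$. Two minor differences are worth noting. First, the paper explicitly writes $D=nK_X+(2n-1)R_{\mathrm{red}}=n\pi^*K_Y+\sum_{P}(2ne_0(P)-1)P$ and verifies the congruence $n_P\equiv -1\bmod e_0(P)$ that K\"ock's theorem requires, whereas you only gesture at this (``precisely the one for which the local representations become free''); since you yourself flag this as the main obstacle, you should carry out the one-line check. Second, the paper computes $\dim_K H^0$ directly by Riemann--Roch on $D$, obtaining $(2n-1)(g_X-1+|X_{\mathrm{ram}}|)$ in one step, while you route the computation through the exact sequence~(\ref{BK-theory}); both are fine and both finish with the same Riemann--Hurwitz cancellation, though the direct route has the mild advantage of not relying on the lemma $H^1(X,\Omega_X^{\otimes n})=0$, which as stated is only valid for $n\geq 2$.
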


\begin{proof}
 Since $G$ is a $p$-group a module is free if and only if it is projective. So we have to show that 
 $H^0\big(X,\Omega_X^{\otimes n}\big( (2n-1) R_\mathrm{red}  \big) \big)$ 
 is projective.
 B. K\"ock proved \cite[Theorem 2.1]{Koeck:04} that if $D=\sum_{P \in X} n_p P$ is a $G$-equivariant divisor,  
 the map $\pi: X \rightarrow Y:=X/G$ is weakly ramified, $n_P\equiv -1 \mod\; e_P$ for all $P \in X$ and 
 $\deg(D) \geq 2g_X-2$, then the module $H^0(X, \mathcal{O}_X(D))$ is projective. 
 
 We have to check the conditions for the divisor $D=nK_X +(2n-1)R_{\mathrm{red}}$, where $K_X$ is a 
 canonical divisor on $X$. Notice that $K_X=\pi^*K_Y+R$ and 
 $R=\sum_{P \in X} 2\big(e_0(P)-1 \big)$, therefore 
 \[
  D= n \pi^* K_Y + \sum_{P\in X: e_0(P)>1}\!\!\!\!\!\!\!\! \left( 2n e_0(P) -2n +2n-1  \right)P.
 \]
Therefore, the condition $n_P \equiv -1 \mod\; e_0(P)$ is satisfied.

We will now compute the dimension of $H^0\big(X,\Omega_X^{\otimes n}\big( (2n-1) R_\mathrm{red}  \big) \big)$ using 
Riemann--Roch theorem, keep in mind that $H^1\big(X,\Omega_X^{\otimes n}\big( (2n-1) R_\mathrm{red}  \big) \big)=0$
\begin{eqnarray*}
 \dim_K H^0\big(X,\Omega_X^{\otimes n}\big( (2n-1) R_\mathrm{red}  \big) \big) &= &
 n(2g_X-2) + (2n-1) |X_{\mathrm{ram}}| +1 - g_X \\
 &=&  (2n-1)(g_X-1+ |X_{\mathrm{ram}}|) \\
 &=& |G| (2n-1)(g_Y-1+ r_0),
\end{eqnarray*}
where in the last equality we have used the 
Riemann--Hurwitz formula  \cite[7, Corollary IV 2.4]{Hartshorne:77}
\[
 g_X-1=|G| (g_Y-1) + \sum_{P\in X_\mathrm{ram}} (e_0(P)-1). 
\]
\end{proof}

\begin{remark}
 This method was applied by the second author and B. K\"ock  in  \cite{kockKo} for the 
$n=2$ case in order to compute the dimension of the tangent space to the deformation functor of curves with automorphisms.
Deformations of curves with automorphisms for Mumford curves were also studied by the first author and 
G. Cornelissen in  \cite{CK}.
\end{remark}
 
The sequence in eq. (\ref{BK-theory}) leads to the  following short exact sequence of modules:
\begin{equation} \label{sesfin}
 0 \rightarrow H^0(X,\Omega_X^{\otimes n}) \rightarrow  
 K[G]^{ (2n-1) (g_Y-1+r_0)} \rightarrow 
 H^0(X,\Sigma) \rightarrow 0.
\end{equation}
Since $\Sigma$ is a skyscraper sheaf the space $H^0(X,\Sigma)$ is the direct sum of the stalks of $\Sigma$
\[
 H^0(X,\Sigma)= \bigoplus_{P \in X_\mathrm{ram}} \Sigma_P \cong \bigoplus_{j=1}^{r_0} \mathrm{Ind}_{G(P_j)}^G (\Sigma_{P_j}),
\]
where, for a subgroup $H$ of $G$, $\mathrm{Ind}_H^GV$ denotes the induced representation 
of an $K[H]$-module $V$ to a $K[G]$-module, i.e., $\mathrm{Ind}_H^G V=V \otimes_{K[H]}K[G]$.

\subsection{Return to Artin-Schreier-Mumford curves: proof of Theorem \ref{mainFree} (2)}
Recall that we are in the case $N=A*B$ and $\Gamma=[A,B]$, where $A\cong B \cong \Z/p\Z$. Set $G=N/\Gamma=\Z/p\Z\times \Z/p\Z$. 
\begin{lemma}
The indecomposable summands of  the module $\mathrm{Ind}_{G(P_j)}^G (\Sigma_{P_j})$ 
are either $K[G]$ or $K[G]/\langle (\sigma -1)^{\lambda} \rangle$, 
where $\sigma=\epsilon_A$ or $\epsilon_B$ and $1\leq \lambda\leq p-1$. 
\end{lemma}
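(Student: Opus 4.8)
The plan is to reduce the computation of $\mathrm{Ind}_{G(P_j)}^G(\Sigma_{P_j})$ to the induction of single Jordan blocks, and then to evaluate those induced modules by exploiting the tensor factorization of the group algebra of $G=\Z/p\Z\times\Z/p\Z$. First I would record the geometric input. Since $G$ is abelian and the cover is weakly ramified, each inertia group $G(P_j)$ is cyclic of order $p$; moreover the explicit action on the Subrao model $(x^p-x)(y^p-y)=c$ shows that no affine point is fixed by a nontrivial automorphism, whereas the points with $x=\infty$ (resp.\ $y=\infty$) are fixed exactly by $A=\langle\epsilon_A\rangle$ (resp.\ $B=\langle\epsilon_B\rangle$). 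Hence $G(P_j)$ is $A$ or $B$, and I write $H=G(P_j)=\langle\sigma\rangle$ with $\sigma\in\{\epsilon_A,\epsilon_B\}$. Because $H\cong\Z/p\Z$, the stalk decomposes as a $K[H]$-module into Jordan blocks, $\Sigma_{P_j}=\bigoplus_{\lambda=1}^{p}J_\lambda^{a_\lambda}$ with $J_\lambda=K[H]/\langle(\sigma-1)^\lambda\rangle$ (Lemma \ref{p-REP}); as induction is additive, it suffices to identify $\mathrm{Ind}_H^G(J_\lambda)$ for each $\lambda$.

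The algebraic core is then a short computation. As $G$ is elementary abelian, I choose a complement $H'=\langle\tau\rangle$ so that $G=H\times H'$ and $K[G]=K[H]\otimes_K K[H']$; writing $s=\sigma-1$ and $t=\tau-1$ this is $K[G]\cong K[s,t]/(s^p,t^p)$. Then
\[
 \mathrm{Ind}_H^G(J_\lambda)=J_\lambda\otimes_{K[H]}K[G]=J_\lambda\otimes_{K[H]}\big(K[H]\otimes_K K[H']\big)=J_\lambda\otimes_K K[H'],
\]
with $\sigma$ acting through the first factor and $\tau$ through the regular action on $K[H']$. Identifying $J_\lambda=K[s]/(s^\lambda)$ and $K[H']=K[t]/(t^p)$ gives $\mathrm{Ind}_H^G(J_\lambda)\cong K[s,t]/(s^\lambda,t^p)=K[G]/\langle(\sigma-1)^\lambda\rangle$ for $1\le\lambda\le p-1$, while for $\lambda=p$ one has $J_p=K[H]$ and $\mathrm{Ind}_H^G(K[H])=K[G]$ (the regular representation induces to the regular representation).

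It remains to check that each $K[G]/\langle(\sigma-1)^\lambda\rangle$ with $1\le\lambda\le p-1$ is indecomposable, which I would get from the invariant-dimension criterion. In $K[s,t]/(s^\lambda,t^p)$ an element is $G$-fixed iff it is annihilated by both $s$ and $t$; on the monomial basis $s^it^j$ with $0\le i<\lambda$, $0\le j<p$, this forces $i=\lambda-1$ and $j=p-1$, so the invariant subspace is the line spanned by $s^{\lambda-1}t^{p-1}$. Since $G$ is an abelian $p$-group, every nonzero $K[G]$-module has nonzero invariants, so by Proposition \ref{14} the one-dimensionality of $V^G$ forces indecomposability. Collecting the summands over $\lambda$, and over the (finitely many) branch points whose inertia is $A$ or $B$, yields the asserted list.

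The routine parts are the additivity of induction and the tensor identification of $\mathrm{Ind}_H^G(J_\lambda)$; the one point that genuinely needs the geometry of the curve, rather than pure representation theory, is the identification of the inertia groups $G(P_j)$ as $A$ or $B$, which guarantees $\sigma\in\{\epsilon_A,\epsilon_B\}$. I expect this to be the main thing to pin down, and it follows from the fixed-point analysis on the Subrao model together with the Riemann--Hurwitz count underlying Theorem \ref{BK-dim} (two branch points, one with inertia $A$ and one with inertia $B$).
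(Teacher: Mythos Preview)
Your proof is correct and somewhat more streamlined than the paper's. The paper's argument proceeds by an \emph{explicit} local computation of the stalk: it identifies $\Sigma_P$ with $\langle t,t^2,\ldots,t^{2n-1}\rangle_K$ under the action $\sigma(1/t)=1/t+1$, then exhibits a concrete basis using the elements $\bigl(\tfrac{t^p}{1-t^{p-1}}\bigr)^i\binom{1/t}{k}$ to obtain the precise decomposition $\Sigma_P=J_p^{\lfloor(2n-1)/p\rfloor}\oplus J_r$. You instead bypass this by invoking the general Jordan decomposition of any $K[H]$-module and reducing to computing $\mathrm{Ind}_H^G(J_\lambda)$ for arbitrary $\lambda$, which you do cleanly via the tensor factorization $K[G]=K[H]\otimes_K K[H']$; the paper uses the equivalent coset-representative description. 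Your invariant-dimension argument for indecomposability (via Proposition~\ref{14}) is also more explicit than the paper, which simply asserts indecomposability. The trade-off is that the paper's explicit formula $\Sigma_P=J_p^{\lfloor(2n-1)/p\rfloor}\oplus J_r$ is reused immediately afterwards to count the $K[G]$-summands in the proof of Theorem~\ref{mainFree}.\ref{part2}, so the extra work there is not wasted; your approach proves the lemma as stated but would still need that explicit count later.
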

\begin{proof}
It follows from the ramification of the function field of  Artin-Schreier-Mum\-ford  curves, seen as a double Artin-Schreier extension of the rational function field, where $r_0=2$, i.e., only two points $p_1,p_2$ of $X/{(A\times B)}$ are ramified in the cover $X \rightarrow X/(A\times B)$. Another way of obtaining this result is by using the theory of graphs of Mumford curves developed by the first author, and by noticing that the subgroup of the normalizer of the Artin-Schreier-Mumford  curve giving rise to the $A\times B$ cover is 
just $A*B$ corresponding to a graph with two ends, see \cite{CKNotices}, \cite[Proposition 5.6.2]{OrbifoldKato}.
Select a point $P_1$ of $X$ which lies above $p_1$ and a point 
$P_2$ of $X$ which lies above $p_2$. 
Let $G(P_1)=A$ and 
$G(P_2)=B$.

We will use an approach similar to \cite{kockKo} in order to study the Galois module 
structure of the stalk $\Sigma_{P_j}$ as a  $K[G_{P_j}]$-module. Let $P=P_j$ for  $j=1$ or $j=2$. 
Notice first that $nK_X=n\pi^* K_Y +nR$, so if the multiplicity   of the divisor  $K_Y$ at $\pi(P)$ is $m$, then 
the multiplicity of $nK_X$ at $P$ is $mnp+ 2n(p-1)$ and the 
multiplicity of $nK_X+(2n-1)R_{\mathrm{red}}$ at $P$
is $mnp+2n(p-1)+(2n-1)$. So if $t$ is a local uniformizer at $P$ and $s$ is a local uniformizer at 
$\pi(P)$ we have that:
\[
\Sigma_P= 
\left\langle 
\frac{t}{s^{mn+2n}}, \frac{t^2}{s^{mn+2n}},\ldots,\frac{t^{2n-1}}{s^{mn+2n}}
\right\rangle_K,
\]
which is 
 $G(P)$-equivariant isomorphic to the $K$-vector space generated  by: 
\[
\Sigma_P=\left\langle 
t,t^2,\ldots,t^{2n-1}
\right\rangle_K.
\] 
The action of $G(P)$ on $\Sigma_p$ is given by the transformation $\sigma(1/t) = 1/t+1$ for a generator 
$\sigma$ of the cyclic group $G(P)$, or equivalently $\sigma(t)=\frac{t}{1+t}$, see \cite{CK-Crelle}.
Notice,  that the element $t^{-p}-t^{-1}=\frac{1-t^{p-1}}{t^p}$ is invariant
and so is its inverse  $t^p (1-t^{p-1})^{-1}$. Here the unit $(1-t^{p-1})^{-1}$, can be seen as a polynomial 
modulo $t^{2n}$, if we expand it in terms of a geometric series and truncate 
the terms of degree $\geq 2n$. 
Now  we analyse the $G(P)$-module structure of $\Sigma_P$ using Jordan blocks.
Observe that for $0 \leq k \leq p-1$:
\[
\sigma \binom{1/t}{k}=\binom{1/t}{k} + \binom{1/t}{k-1},
\]
where 
\[
\binom{1/t}{k}=
\frac{1}{k!} \prod_{\nu=0}^{k-1} \left(\frac{1}{t}-\nu \right)=\frac{1}{k! t^k} \prod_{\nu=0}^{k-1} (1-t\nu). 
\]
Note that $\binom{1/t}{k}$ is a rational function, where the denominator 
is $k!t^k$. So if we multiply it by the invariant element
$t^p (1-t^{p-1})^{-1}$ we obtain a polynomial of degree $p-k$. Another $K$-vector space basis of $\Sigma_P$ is given by: 
\[
\left(
\frac{t^p}{(1-t^{p-1})} \right)^i \binom{1/t}{k}, 
\begin{array}{l} {\mbox{ where }
1\leq i \leq \lf \frac{2n-1}{p} \rf  \mbox{ and } 0 \leq k \leq p-1}
\\
{ \mbox{ or } i=\lf \frac{2n-1}{p} \rf+1 \mbox{ and } p-r \leq  k\leq p-1
}
\end{array}.
\]
The above defined elements are seen as polynomials by expanding them 
as powerseries and truncate the powers of $t$ greater than $2n$.
These polynomials, depending on $i$ and $k$, have degree $pi-k$.
Their degrees  start from degree one ($i=1,k=p-1$) to $2n-1$ ($i= \lf \frac{2n-1}{p} \rf +1, k=p-r$). 

For fixed $i$, $i=1,\dots,\lf \frac{2n-1}{p} \rf$,  and by allowing $k$ to vary from $0\leq k \leq p-1$, we obtain a Jordan block $J_p$.
The remaining block $i=\lf\frac{2n-1}{p} \rf+1$, $p-r\leq k \leq p-1$ is $J_r$.

So the structure of $\Sigma_P$ is given by 
\begin{equation} \label{SP}
\Sigma_P=  J_p^{ \lf \frac{2n-1}{p} \rf}  \bigoplus J_{r}.
\end{equation} 
 Recall \cite[12.16 p.74]{Curtis-Reiner} that if $H$ is a subgroup of $G$ and $g_1,\ldots,g_\ell$ is a set of 
 coset representatives of $G$ in $H$,  then for an $K[H]$-module $M$ the induced module can 
 be written as 
 \[
 \mathrm{Ind}_H^G M=\bigoplus_{\nu=1}^\ell g_\nu \otimes M.
 \] 
 Using the above equation for $G=A\times B$ and $H=G(P_1)=A$ (resp. $G(P_2)=B$) we 
 have
 \[ \mathrm{Ind}_{G(P_j)}^G(J_p)=K[G] \mbox{ and } 
 \mathrm{Ind}_{G(P_1)}^G(J_r)= 
 \frac{K[G]}{ (\epsilon_A -1)^{r} }.\]
Similarly 
\[
\mathrm{Ind}_{G(P_2)}^G(J_r)= 
 \frac{K[G]}{ (\epsilon_B -1)^{r} }\]
 and both of the above $K[G]$-modules are indecomposable. 
\end{proof}

\begin{proposition} 
 The indecomposable summands $V_i$ of $H^0(X,\Omega_X^{\otimes n})$ are either 
 $K[G]$ or $K[G]/\langle (\sigma -1)^{p-r} \rangle$, for $\sigma=\epsilon_A$ or $\sigma=\epsilon_B$ and  $r$ is the remainder of the division 
 $2n-1$ by $p$. 
\end{proposition}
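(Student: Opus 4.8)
The plan is to read eq. (\ref{sesfin}) as an assertion about syzygies. For the Subrao curve we have $g_Y=0$ and, by the preceding lemma, $r_0=2$ with $G(P_1)=A$ and $G(P_2)=B$; hence eq. (\ref{sesfin}) specializes to the short exact sequence
\[
0 \rightarrow H^0(X,\Omega_X^{\otimes n}) \rightarrow K[G]^{2n-1} \rightarrow H^0(X,\Sigma) \rightarrow 0,
\]
so that $H^0(X,\Omega_X^{\otimes n})$ is identified with the kernel $\tilde{\Omega}\big(H^0(X,\Sigma)\big)$ of an epimorphism from a \emph{free} $K[G]$-module. Since for $K[G]$ the notions of projective and injective coincide, I would compute this kernel modulo projectives by first computing the minimal syzygy $\Omega\big(H^0(X,\Sigma)\big)$ and then accounting for the free part separately.

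Next I would decompose the target. Combining the preceding lemma with eq. (\ref{SP}) gives
\[
H^0(X,\Sigma)=K[G]^{2\lf\frac{2n-1}{p}\rf}\oplus K[G]/\langle(\epsilon_A-1)^r\rangle\oplus K[G]/\langle(\epsilon_B-1)^r\rangle,
\]
using $K[G]/\langle(\sigma-1)^\ell\rangle\cong \mathrm{Ind}_{\langle\sigma\rangle}^G J_\ell$ and $\mathrm{Ind}_A^G J_p=K[G]$. Because the minimal projective cover of a direct sum is the direct sum of the covers and $\Omega$ annihilates projectives, only the last two summands contribute. To evaluate $\Omega\big(K[G]/\langle(\epsilon_A-1)^r\rangle\big)$ I would use that $K[G]=K[A]\otimes_K K[B]$ is free over $K[A]$, so $\mathrm{Ind}_A^G=-\otimes_{K[A]}K[G]$ is exact: applying it to the $K[A]$-projective cover $0\rightarrow J_{p-r}\rightarrow K[A]\rightarrow J_r\rightarrow 0$ (recall $\Omega_{K[A]}(J_r)=J_{p-r}$) yields a projective cover over $K[G]$, because $\mathrm{Ind}_A^G J_r$ is cyclic and $K[G]$ is local. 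Thus $\Omega\big(K[G]/\langle(\epsilon_A-1)^r\rangle\big)=K[G]/\langle(\epsilon_A-1)^{p-r}\rangle$, and symmetrically for $B$.

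Finally I would invoke Schanuel's lemma to compare the non-minimal resolution from eq. (\ref{sesfin}) with the minimal one, obtaining
\[
H^0(X,\Omega_X^{\otimes n})\cong K[G]/\langle(\epsilon_A-1)^{p-r}\rangle\oplus K[G]/\langle(\epsilon_B-1)^{p-r}\rangle\oplus K[G]^{s}
\]
for some $s\geq 0$. Since the two non-free summands are induced from the cyclic indecomposables $J_{p-r}$ over $K[A]$ and over $K[B]$, each has a one-dimensional socle and is therefore indecomposable (exactly as in the preceding lemma), while the free part splits into copies of $K[G]$; by Krull--Schmidt these exhaust the indecomposable summands, proving the claim. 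When $p\mid 2n-1$, i.e. $r=0$, the target $H^0(X,\Sigma)$ is already free, its syzygy vanishes, and $H^0(X,\Omega_X^{\otimes n})$ is free, consistent with the degenerate reading $K[G]/\langle(\sigma-1)^p\rangle=K[G]$.

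The main obstacle is the clean passage between the explicit, non-minimal presentation in eq. (\ref{sesfin}) and the minimal syzygy: one must verify that $\mathrm{Ind}_A^G$ really carries the $K[A]$-projective cover to the $K[G]$-projective cover (this rests on locality of $K[G]$ together with cyclicity of the induced module) and that Schanuel's lemma introduces only free summands, so that no spurious non-free indecomposable type can appear. Pinning down the exact value $s=2n-1-2\lc\frac{2n-1}{p}\rc$ required for the full Theorem \ref{mainFree}.\ref{part2} is then a routine dimension count and is not needed for the present statement.
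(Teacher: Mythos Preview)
Your argument is correct and rests on the same exact sequence \eqref{sesfin} and the same identification $\Omega\big(K[G]/\langle(\sigma-1)^r\rangle\big)\cong K[G]/\langle(\sigma-1)^{p-r}\rangle$ as the paper, but the packaging is different. The paper proceeds one indecomposable summand $V_i$ at a time: it forms the injective hull $V_i\hookrightarrow K[G]^a$, uses injectivity to map this into \eqref{sesfin}, and then argues that $\Omega^{-1}(V_i)$ must appear as a non-projective indecomposable summand of $H^0(X,\Sigma)$, whence $V_i\cong\Omega\Omega^{-1}(V_i)$ is of the asserted form. Your global application of Schanuel's lemma is tidier: it avoids the per-summand diagram chase (whose claim that $\Omega^{-1}(V_i)$ is actually a \emph{direct summand} of $H^0(X,\Sigma)$ is left somewhat implicit in the paper) and yields the full Krull--Schmidt decomposition---hence Theorem~\ref{mainFree}.\ref{part2} up to the value of $s$---in one step. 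The only point to watch is your justification that $K[G]/\langle(\sigma-1)^{p-r}\rangle$ is indecomposable: rather than appealing to induction from $J_{p-r}$, it is cleaner to note (as you implicitly do via the syzygy) that this module sits inside $K[G]$, which has simple socle since $G$ is a $p$-group.
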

\begin{proof}
Let $V_i$ be a indecomposable summand of $H^0(X,\Omega_X^{\otimes n})$. 
Consider the injective hull of $V_i$. This is the smallest injective module containing 
$V_i$, and it is of the form $K[G]^a$. Keep in mind that for group algebras of  finite 
groups the notions of injective and projective modules  coincide \cite[Theorem 62.3]{Curtis-Reiner}.

Therefore we have to consider the 
smallest 
$a$ such that $V_i \subset K[G]^a$.
We have  the short exact sequence:
\begin{equation} \label{Sha1}
 0 \rightarrow V_i \rightarrow K[G]^a \rightarrow {\Omega}^{-1}(V_i) \rightarrow 0, 
\end{equation}
where $\Omega^{-1}(M)$ for a a $K[G]$-module denotes the cokernel of the embedding of $M$ inside its injective hull. 
Since the algebra $K[G]$ is self injective (i.e., $K[G]$ is injective) we have
(for some appropriate natural number $t$)
\begin{equation} \label{vanishone}
V_i \cong \Omega (\Omega^{-1})(V_i) \bigoplus K[G]^t,
\end{equation}
where $\Omega(\Omega^{-1}(V_i))$ denotes the loop space of $\Omega^{-1}(V_i)$,  see \cite[Exercise 1 p.11]{Benson}. Since $V_i$ is indecomposable,
one of the two direct summands of eq. (\ref{vanishone}) is zero, so  either
$V_i \cong K[G]$ or $V_i=\Omega (\Omega^{-1})(V_i)$.
%
%

In the second case, we can consider 
the following diagram, where the first row comes from eq. (\ref{sesfin}) and 
the second by eq. (\ref{Sha1}):
\[
\xymatrix{
0 \ar[r] &  H^0(X,\Omega_X^{\otimes n})  \ar[r]  & K[G]^{(2n-1)(g_Y-1+r_0)} \ar[r]
& H^0(X,\Sigma) \ar[r] & 0 \\
0 \ar[r] & V_i \ar[r] \ar[u] & K[G]^a  \ar[r] \ar[u] & \Omega^{-1}(V_i) \ar[r] \ar[u] & 
0 
}.
\]
Notice that since $V_i$ is a direct summand of $H^0(X,\Omega_X^{\otimes n})$ which is contained in $K[G]^{(2n-1)(g_Y-1+r_0)}$ we can assume that the injective hull $K[G]^a$ of $V_i$ is a submodule of $K[G]^{(2n-1)(g_Y-1+r_0)}$.
 The module $\Omega^{-1}(V_i)$ is a non-zero  indecomposable non-projective factor of $H^0(X,\Sigma)$
and   is isomorphic to   $\mathrm{Ind}_{G(P_i)}^G(J_r)=K[G]/\langle (\sigma -1)^{r} \rangle$.
It can not be $K[G]$ since $K[G]$ is projective.
We compute  
\[V_i=\Omega(\mathrm{Ind}_{G(P_i)}^G(J_r))=\Omega(K[G]/\langle (\sigma -1)^{r}) \rangle)\cong K[G]/\langle (\sigma-1)^{p-r} \rangle.\]\end{proof}

\begin{corollary} \label{inv-deco}
 The space  $H^0(X,\Omega_X^{\otimes n})^G$ has dimension equal to the number of indecomposable summands.
\end{corollary}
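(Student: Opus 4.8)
The plan is to exploit that the functor of $G$-invariants is additive over direct sums, so that it suffices to show each indecomposable summand $V_i$ of $H^0(X,\Omega_X^{\otimes n})$ satisfies $\dim V_i^G = 1$; summing over $i$ then gives that $\dim H^0(X,\Omega_X^{\otimes n})^G$ equals the number of summands. By the preceding proposition every such $V_i$ is isomorphic either to $K[G]$ or to $K[G]/\langle(\sigma-1)^{p-r}\rangle$ with $\sigma\in\{\epsilon_A,\epsilon_B\}$, so only these two types of modules need be analysed.

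Since $G$ is a $p$-group and $\mathrm{char}\,K=p$, the only irreducible $K[G]$-module is the trivial one; hence a vector is $G$-invariant precisely when it spans a copy of the trivial module, i.e. $V^G$ coincides with the socle $\mathrm{soc}(V)$. This reduces the statement to the assertion that each of the two modules above has one-dimensional socle. To compute socles I would pass to the presentation $K[G]=K[u,v]/(u^p,v^p)$, where $u=\epsilon_A-1$ and $v=\epsilon_B-1$ generate the nilpotent augmentation ideal; invariance under $\epsilon_A$ and under $\epsilon_B$ then translates into annihilation by $u$ and by $v$ respectively.

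Under this presentation $K[G]/\langle(\epsilon_A-1)^{p-r}\rangle\cong K[u,v]/(u^{p-r},v^p)$, since $p-r\le p$ makes the relation $u^{p-r}$ absorb $u^p$. Writing a general element as $f=\sum c_{ij}u^iv^j$ with $0\le i\le p-r-1$ and $0\le j\le p-1$, the two conditions $uf=0$ and $vf=0$ force $c_{ij}=0$ except for the single top monomial $i=p-r-1$, $j=p-1$. Hence the socle is the line spanned by $u^{p-r-1}v^{p-1}$, and the identical computation for $K[G]=K[u,v]/(u^p,v^p)$ yields the line spanned by $u^{p-1}v^{p-1}$ (the norm element). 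In either case $\dim V_i^G=1$, and additivity of invariants over the decomposition finishes the argument.

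The only point requiring genuine care is this last computation: indecomposability by itself does \emph{not} guarantee one-dimensional invariants, as the Heller--Reiner example recorded after Proposition \ref{numberofsummands} exhibits an indecomposable $K[\Z/p\Z\times\Z/p\Z]$-module with $\dim M^G>1$. Thus it is the explicit monomial determination of the socle of each summand, rather than any abstract appeal to indecomposability, that makes the count exact; the rest is purely formal.
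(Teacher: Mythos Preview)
Your argument is correct. You reduce, as the paper does, to showing that each indecomposable summand $V_i$ has one-dimensional $G$-invariants, and you then verify this by passing to the commutative presentation $K[G]\cong K[u,v]/(u^p,v^p)$ and computing the socle of each of the two possible module types explicitly as a single top monomial.

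The paper's own proof is a single line and proceeds slightly differently: rather than computing the socle of the \emph{quotient} $K[G]/\langle(\sigma-1)^{p-r}\rangle$, it uses the fact (established in the preceding proposition via the injective-hull construction) that every indecomposable summand $V_i$ \emph{embeds} into $K[G]$. Since $K[G]^G$ is one-dimensional and $V_i^G\neq 0$ (the latter because $G$ is a $p$-group), one gets $\dim V_i^G=1$ without ever writing down a basis. Your explicit socle computation has the virtue of being self-contained and of making visible exactly which element generates the invariants; the paper's embedding argument is shorter and shows more transparently why the Heller--Reiner pathology is avoided here (namely, because all the summands in question happen to sit inside a single copy of $K[G]$). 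Both routes are valid and arrive at the same conclusion for the same structural reason.
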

\begin{proof}
Notice that each indecomposable summand $V_i$ is contained in a $K[G]$. 
\end{proof}

\begin{corollary}
 If $2n-1 \equiv 0 \mod \;
p$ then $H^0(X,\Omega^{\otimes n}) $ is projective. 
\end{corollary}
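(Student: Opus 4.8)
The plan is to read the corollary off the structural results already in hand, and two routes are available. The quickest is to invoke the proposition just proved, which lists the indecomposable summands of $H^0(X,\Omega_X^{\otimes n})$ as being either $K[G]$ or $K[G]/\langle(\sigma-1)^{p-r}\rangle$ with $\sigma=\epsilon_A$ or $\sigma=\epsilon_B$ and $r$ the remainder of $2n-1$ modulo $p$. The hypothesis $2n-1\equiv 0 \bmod p$ means exactly $r=0$, so that $p-r=p$. In the commutative group algebra $K[G]$ over a field of characteristic $p$ one has, since $\sigma$ has order $p$, the identity $(\sigma-1)^p=\sigma^p-1=0$. Hence $\langle(\sigma-1)^p\rangle=0$ and $K[G]/\langle(\sigma-1)^p\rangle=K[G]$. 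Every indecomposable summand is therefore free, so $H^0(X,\Omega_X^{\otimes n})$ is a free $K[G]$-module, in particular projective.

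I prefer, however, a second and more intrinsic route that does not use the detailed multiplicity count of Theorem \ref{mainFree}.\ref{part2} but only the machinery of B. K\"ock, since it makes clear \emph{why} projectivity holds. When $r=0$, the computation of the stalk in eq. (\ref{SP}) degenerates: the $J_r$-summand is absent, so $\Sigma_P=J_p^{\lf \frac{2n-1}{p}\rf}$. As $J_p\cong K[G(P)]$ by Lemma \ref{p-REP}, each stalk is a \emph{free} $K[G(P)]$-module. Induction carries free modules to free modules, since $\mathrm{Ind}_{G(P_j)}^G K[G(P_j)]=K[G]$; consequently $H^0(X,\Sigma)=\bigoplus_j \mathrm{Ind}_{G(P_j)}^G(\Sigma_{P_j})$ is free, hence projective. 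Looking now at the short exact sequence (\ref{sesfin}), its right-hand term $H^0(X,\Sigma)$ is projective, so the sequence splits and exhibits $H^0(X,\Omega_X^{\otimes n})$ as a direct summand of the free module $K[G]^{(2n-1)(g_Y-1+r_0)}$. A direct summand of a projective module is projective, which gives the claim.

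There is no real obstacle in this corollary; it is genuinely a formal consequence of what precedes. The only two points that must be verified, and which I would state explicitly, are the vanishing $(\sigma-1)^p=0$ in $K[G]$ in characteristic $p$ (route one) and the disappearance of the $J_r$-block when $r=0$ so that the stalks become free over the decomposition group (route two). I would write up route two, remarking that it is the more robust argument because it relies only on two general facts—that induction preserves projectivity and that a short exact sequence with projective cokernel splits—rather than on the full decomposition of Theorem \ref{mainFree}.\ref{part2}.
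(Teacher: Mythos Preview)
Your proposal is correct. The paper states this corollary without proof, placing it immediately after the proposition classifying the indecomposable summands of $H^0(X,\Omega_X^{\otimes n})$; your route one is exactly the intended reading---when $r=0$ the non-free summands $K[G]/\langle(\sigma-1)^{p-r}\rangle$ collapse to $K[G]$.

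Your route two is a genuinely different and cleaner argument: rather than invoking the classification of indecomposable summands, you observe directly from eq.~(\ref{SP}) that the stalks $\Sigma_P$ are free over the decomposition groups when $r=0$, induce to get $H^0(X,\Sigma)$ free over $K[G]$, and then split the sequence (\ref{sesfin}). This bypasses the proposition entirely and uses only Theorem~\ref{BK-dim} and the stalk computation. The advantage is conceptual transparency and independence from the somewhat delicate analysis of which $\Omega^{-1}(V_i)$ can occur; the paper's implicit route one has the advantage of being a one-line deduction once the proposition is in hand. Either would be acceptable as a write-up.
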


Now we finish the proof of \ref{mainFree} (2).
Using the sequence given in eq. (\ref{sesfin}) and the fact that only two 
points of $Y$  are ramified in $X\rightarrow Y$, i.e., $g_Y=0,r_0=2$, together with eq. (\ref{SP}) we obtain that the number of 
 summands which are isomorphic to $K[G]$ in $H^0(X,\Omega^{\otimes n})$ is $2n-1 - 2\lc \frac{2n-1}{p} \rc$.
There are two indecomposible  summands in $H^0(X,\Omega_X^{\otimes n})$, $V_1,V_2$ 
such that 
\[
K[G]/V_1= K[G]/h^r \mbox{ and } K[G]/V_2= K[G]/(\epsilon_B-1)^r.
\]
We see that 
\[
V_1=K[G]/h^{p-r} \mbox{ and } V_2=K[G]/(\epsilon_B-1)^{p-r}.
\]
Adding all these together we obtain:
\[
 H^0(X,\Omega_X^{\otimes n}) = K[G]^{2n-1 - 2\lc \frac{2n-1}{p} \rc} \bigoplus  K[G]/h^{p-r}
\bigoplus K[G]/(\epsilon_B-1)^{p-r}.
\]
The Proof of Theorem \ref{mainFree} (2) is now complete. 

 \def\cprime{$'$}
\providecommand{\bysame}{\leavevmode\hbox to3em{\hrulefill}\thinspace}
\providecommand{\MR}{\relax\ifhmode\unskip\space\fi MR }
\providecommand{\MRhref}[2]{%
  \href{http://www.ams.org/mathscinet-getitem?mr=#1}{#2}
}
\providecommand{\href}[2]{#2}

\end{document}